\documentclass[11pt, a4paper]{article}
\usepackage{amsthm}
\usepackage{mathrsfs}
\usepackage[numbers,sort&compress]{natbib}
\usepackage{amsmath,amssymb,latexsym,color}
\usepackage[colorlinks,
linkcolor=blue,
anchorcolor=green,
citecolor=magenta
]{hyperref}
\usepackage{graphicx}
\usepackage{tikz}
\usetikzlibrary{calc}
\usepackage{CJK}

\usepackage{authblk}

\long\def\delete#1{}

\usepackage{color}

\definecolor{Blue}{rgb}{0,0,1}
\definecolor{Red}{rgb}{1,0,0}
\definecolor{DarkGreen}{rgb}{0,0.6,0}
\definecolor{DarkYellow}{rgb}{1,1,0.2}
\definecolor{DarkPurple}{rgb}{.6,0,1}

\usepackage{xcolor}
\usepackage[normalem]{ulem}

\usepackage{enumerate}
\usepackage{cleveref}
\crefformat{section}{\S#2#1#3}
\crefformat{subsection}{\S#2#1#3}
\crefformat{subsubsection}{\S#2#1#3}
\crefrangeformat{section}{\S\S#3#1#4 to~#5#2#6}
\crefmultiformat{section}{\S\S#2#1#3}{ and~#2#1#3}{, #2#1#3}{ and~#2#1#3}
\def\ma{\mathcal{A}}
\def\mb{\mathcal{B}}

\def\mf{\mathcal{F}}
\def\mg{\mathcal{G}}
\def\mh{\mathcal{H}}
\def\mi{\mathcal{I}}
\def\mj{\mathcal{J}}

\def\mp{\mathcal{P}}
\def\mq{\mathcal{Q}}
\def\mr{\mathcal{R}}
\def\ms{\mathcal{S}}
\def\mt{\mathcal{T}}
\def\mv{\mathcal{V}}

\def\bs{\setminus}
\def\mmu{\mathcal{U}}

\def\tf{\tau_t(\mf)}
\def\ttf{\mt_t(\mf)}
\def\tttf{\tau_t(\mt_t(\mf))}

\def\c{\choose}

\def\ge{\geqslant}
\def\le{\leqslant}
\def\b{\brack}

\def\ro{\romannumeral}

\def\a{\alpha}
\def\b{\beta}

\numberwithin{equation}{section}

\allowdisplaybreaks[4]

\newtheorem{thm}{Theorem}[section]
\newtheorem{cl}{Claim}
\newtheorem{lem}[thm]{Lemma}

\newtheorem{pr1}[thm]{Proposition}

\theoremstyle{remark}
\newtheorem{re}{\bf Remark}
\newtheorem{con}{\bf Construction}

\begin{document}
	\setcounter{page}{1}
	\renewcommand{\thefootnote}{}
	\newcommand{\remark}{\vspace{2ex}\noindent{\bf Remark.\quad}}
	\renewcommand{\abovewithdelims}[2]{%
		\genfrac{[}{]}{0pt}{}{#1}{#2}}


	\def\qed{\hfill$\Box$\vspace{11pt}}

\title {\bf Extremal $t$-intersecting families for finite sets with $t$-covering number at least $t+2$}

\author[a]{Tian Yao\thanks{E-mail: \texttt{tyao@hist.edu.cn}}}
\author[b]{Dehai Liu\thanks{Corresponding author. E-mail: \texttt{liudehai@mail.bnu.edu.cn}}}
\author[b]{Kaishun Wang\thanks{E-mail: \texttt{wangks@bnu.edu.cn}}}
\affil[a]{School of Mathematical Sciences, Henan Institute of Science and Technology, Xinxiang 453003, China}
\affil[b]{Laboratory of Mathematics and Complex Systems (Ministry of Education), School of
	Mathematical Sciences, Beijing Normal University, Beijing 100875, China}

\date{}

\openup 0.5\jot
\maketitle

\begin{abstract}
	
	Let $\mf\subseteq{[n]\c k}$ be a $t$-intersecting family. Define the $t$-covering number $\tf$ of $\mf$ as the minimum size of a subset $S$ of $[n]$ with $|S\cap F|\ge t$ for each $F\in\mf$. 
In this paper, we characterize $\mf$ for which $|\mf|$ takes the maximum value under the condition that $\tf\ge t+2$ and $n$ is sufficiently large,  thereby generalizing two results by Frankl.

	\vspace{2mm}
	
	\noindent{\bf Key words:}\ \ Erd\H{o}s-Ko-Rado Theorem; Hilton-Milner Theorem; $t$-intersecting family; covering number
	
	\
	
	\noindent{\bf AMS classification:} \   05D05

\end{abstract}

\section{Introduction}

For positive integers $n$ and $k$ with $n\ge k$, let $[n]=\{1,2,\dots,n\}$ and ${[n]\c k}$ denote the set of all its $k$-subsets. Suppose that $t$ is a positive integer with $k\ge t$. A family $\mf\subseteq{[n]\c k}$ with $|F\cap F'|\ge t$ for any $F,F'\in\mf$ is called  \emph{$t$-intersecting}, where $t$ is omitted when $t=1$. Moreover, if each member of $\mf$ contains a fixed $t$-subset of $[n]$, then we say $\mf$ is \emph{trivial}, and \emph{non-trivial} otherwise. 
A \emph{$t$-cover} $S$ of $\mf$ is a subset of $[n]$ with $|S\cap F|\ge t$ for each $F\in\mf$. 
The \emph{$t$-covering number} $\tf$ of $\mf$ is defined as the minimum size of a $t$-cover, which is an important parameter to measure the nontriviality  of a $t$-intersecting family. Observe that $t\le\tf\le k$, and $\mf$ is trivial if and only if $\tf=t$.

For $n\ge 2k-t+1$ and $k\ge s\ge t$, define 
$$f(n,k,t,s)=\max\left\{|\mf|:\mf\subseteq{[n]\c k}\ \mbox{is}\ t\mbox{-intersecting with}\ \tf\ge s\right\}.$$
The famous Erd\H{o}s-Ko-Rado Theorem \cite{EKR,Fn,Wn} states that if $n>(t+1)(k-t+1)$, then $f(n,k,t,t)={n-t\c k-t}$ and all  extremal families are trivial. The value of $f(n,k,t,t+1)$  and the structure of maximum-sized non-trivial $t$-intersecting families are determined by the Hilton-Milner-Frankl Theorem \cite{SHMT,SHM} for sufficiently large $n$. 
 In \cite{AK,AK2}, Ahlswede and Khachatrian completely determined $f(n,k,t,s)$ with $s\in\{t,t+1\}$, and characterized extremal families.

There are also extensive results on $f(n,k,1,3)$.
A result of Erd\H{o}s and Lov\'{a}sz \cite{EL1975} yields $f(n,3,1,3)\ge10$. In fact, the equality holds  and all extremal families have been described \cite{T1,Moura1999,Polcyn2017}.  Frankl \cite{Frankl1980} determined $f(n,k,1,3)$ and characterized the unique extremal  structure for $k\ge4$ and sufficiently large $n$. 
Recently, Frankl and Wang \cite{Frankl2025} determined  $f(n,k,1,3)$ for $n\ge2k\ge14$, and
Kupavskii \cite{Kupaviskii2026} showed that Frankl's result in \cite{Frankl1980} holds for $n>2k\ge200$. 
We refer readers to \cite{EL1975,Frankl2023,Frankl1995,Frankl1996,Frudi1988,L1975} for more results on $f(n,k,1,s)$.

For general $t$, Moura \cite{Moura1999} presented recursive constructions of all maximal $t$-intersecting subfamilies of ${[n]\c t+2}$. Very recently,  Frankl \cite{Frankl2026} determined $f(n,t+2,t,t+2)$ and described extremal families. 
In this paper, we  determine $f(n,k,t,t+2)$ and corresponding  extremal families  for $k\ge t+3$ and large $n$. 
To prepare for our main  result, we first  define ${a\c b}=0$ for integers $a$ and $b$ with $a>0>b$ or $b>a>0$, and 
introduce the following three examples of $t$-intersecting families with  $t$-covering number $t+2$.

\begin{con}\label{con-1}
	Let $n>2k$, $k\ge t+3$, $T,A\in{[n]\c t}$ and $B,C\in{[n]\c k-t}$. Suppose $|T\cap A|=t-1$, $T\cap(B\cup C)=\emptyset$ and $A,B,C$ are pairwise disjoint. Pick $u\in C$. Set 
	$$G_1=A\cup B,\quad G_2=A\cup C,\quad G_3=(T\cap A)\cup B\cup\{u\}$$
	and
	\begin{equation*}
	\begin{aligned}
		\mf=&\left\{F\in{[n]\c k}: F\cap(A\cup T)=T,\ F\cap B\neq\emptyset,\ F\cap C\neq\emptyset\right\}\\
		&\cup\left\{F\in{[n]\c k}: A\cup T\subseteq F,\ F\cap(B\cup\{u\})\neq\emptyset\right\}\cup\{G_1,G_2,G_3\}.
			\end{aligned}
	\end{equation*}
	Define
	$$f_1(n,k,t)={n-t\c k-t}-3{n-k-1\c k-t}+{n-k-2\c k-t}+{n-2k+t-1\c k-t}+3.$$
\end{con}

\begin{con}\label{con-2}
	Let $n>2k$, $k\ge t+3$,  $M\in \binom{[n]}{k+2}$ and $W\in \binom{M}{t+2}$. Set
	\begin{equation*}
		\begin{aligned}
			\mf=&\left\{F\in \binom{[n]}{k}: W\subseteq F\right\}\cup\left\{ F\in \binom{[n]}{k}: \left|F\cap W\right|=t+1,\ F\cap \left(M\bs W\right)\neq \emptyset\right\}\\
			&\cup \left\{F\in{M\c k}: |F\cap W|=t\right\}. 
		\end{aligned}
	\end{equation*}
	Define
	$$f_2(n,k,t)={n-t-2\c k-t-2}+(t+2)\left({n-t-2\c k-t-1}-{n-k-2\c k-t-1}\right)+\binom{t+2}{2}.$$
\end{con}

\begin{con}\label{con-3}
	Let $n>2k$, $k\ge t+3$ and  $Z\in{[n]\c t+4}$. Set
	$$\mf=\left\{F\in{[n]\c k}: |F\cap Z|\ge t+2\right\}.$$
	Define
	$$f_3(n,k,t)={t+4\c2}{n-t-4\c k-t-2}+(t+4){n-t-4\c k-t-3}+{n-t-4\c k-t-4}.$$
\end{con}

As stated in Lemmas \ref{con1-maximal}, \ref{con2-maximal} and \ref{con3-maximal}, the families described in Constructions \ref{con-1}, \ref{con-2} and \ref{con-3} have sizes $f_1(n,k,t)$, $f_2(n,k,t)$ and $f_3(n,k,t)$, respectively. 
Our main result is presented as follows, which generalizes two results by Frankl \cite{Frankl1980,Frankl2026}.

\begin{thm}\label{main}
	Let $\mf$ be a $t$-intersecting subfamily of ${[n]\c k}$ with $\tf\ge t+2$. If $k\ge t+3$ and $n\ge{t+3\c2}(k-t+1)^4$,
	then
	$$|\mf|\le\max\{f_1(n,k,t),f_2(n,k,t),f_3(n,k,t)\}.$$
	Moreover, if equality holds, then  $\mf$ is a family described in one of Constructions \ref{con-1}, \ref{con-2} and \ref{con-3}.
\end{thm}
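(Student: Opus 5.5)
Let $\mf$ be a maximum-sized family satisfying the hypotheses. The plan follows Frankl's $\tau=3$ argument, organised around the family $\ttf$ of minimum $t$-covers of $\mf$. By maximality we may assume $\tf=t+2$: if $\tf\ge t+3$, then the standard bound $|\mf|\le \binom{k}{t}\cdot k^{\,\tf-t}\binom{n-\tf}{k-\tf}$ gives $|\mf|=O(k^{O(1)}\binom{n}{k-t-3})$. This is far below $f_3(n,k,t)=\Theta\bigl(\binom{n}{k-t-2}\bigr)$ once $n\ge\binom{t+3}{2}(k-t+1)^4$.

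\textbf{Step 1: count through the minimum covers.} Every $F\in\mf$ contains a $(t+1)$-set $S$ with $|\{F'\in\mf:S\subseteq F'\}|$ large. For each $(t+1)$-set $S$, let $\mf_S=\{F\in\mf:S\subseteq F\}$; since $\tau_t(\mf)=t+2$, no $S$ is a $t$-cover. A branching argument shows that each $F\in\mf$ contains some $T\in\ttf$ up to a term of order $k^{O(1)}\binom{n}{k-t-3}$. Precisely, one builds $t$-covers by iteratively adding elements of a member of $\mf$ that meets the current set in fewer than $t$ elements. This yields
\[
|\mf|\le\sum_{T\in\ttf}\binom{n-t-2}{k-t-2}+O\Bigl((k-t+1)^{4}\tbinom{t+3}{2}\tbinom{n}{k-t-3}\Bigr).
\]
The constant in the hypothesis on $n$ is chosen to absorb this error. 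In particular $|\ttf|\ge\binom{t+4}{2}$ is necessary to reach $f_3$; otherwise we are in the $f_1$/$f_2$ regime.

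\textbf{Step 2: structure of $\ttf$.} The family $\ttf\subseteq\binom{[n]}{t+2}$ is itself $t$-intersecting for large $n$. Indeed, two disjoint-ish covers $T,T'$ with $|T\cap T'|<t$ would force every member of $\mf$ to meet both in $t$ elements, which makes the $t$-intersection property of $\mf$ too restrictive for the count. A $t$-intersecting family of $(t+2)$-sets is either contained in $\binom{Z}{t+2}$ with $|Z|=t+4$, or all members contain a common $t$-set, or all share a common $(t+1)$-set, or it falls into a few small sporadic types, by Frankl's / Moura's classification for $k=t+2$. I would then split into cases:
\begin{enumerate}[(i)]
\item $|\bigcup\ttf|=t+4$ with many covers, leading to Construction \ref{con-3};
\item $\ttf$ has a large sunflower-like part around a fixed $W$ with $|W|=t+2$ together with elements of $M\setminus W$, leading to Construction \ref{con-2};
\item $\ttf$ consists of few covers through $T$ and a $(t-1)$-subset of it, leading to Construction \ref{con-1}.
\end{enumerate}
In each case the members of $\mf$ not containing a minimum cover (the "lower" part, e.g.\ $G_1,G_2,G_3$ or $\binom{M}{k}$-sets) are bounded exactly using $t$-intersection with the main part. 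Inclusion--exclusion then gives the exact values $f_i$, which Lemmas \ref{con1-maximal}--\ref{con3-maximal} identify.

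\textbf{Step 3: comparison and uniqueness.} Once the bound $|\mf|\le f_i(n,k,t)+(\text{negative correction unless }\mf\text{ equals the construction})$ is obtained in each case, we conclude $|\mf|\le\max f_i$. Equality forces every inequality to be tight, which pins down $\mf$ as one of the constructions.

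The main obstacle is Step 2's case analysis. One must show that intermediate configurations of $\ttf$ (mixtures of the three types, or $|\ttf|$ between the thresholds) lose at least order $\binom{n}{k-t-2}$ or $\binom{n}{k-t-3}$ relative to the best $f_i$. This has to be done with explicit error terms valid already at $n\approx\binom{t+3}{2}k^4$. I would first try a weighted bound $|\mf|\le\sum_{T}|\mf_T|$ refined by the exact count of $\{F:|F\cap T|\ge t\}$ for $F$ avoiding other covers.
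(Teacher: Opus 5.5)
Your outline follows the paper's skeleton: reduce to $\tau_t(\mf)=t+2$, count $\mf$ through $\ttf$, use that $\ttf$ is $t$-intersecting, then split by the structure of $\ttf$. But the step that carries the content is missing, and the route you indicate for it cannot supply it. You propose to classify $\ttf$ as an abstract $t$-intersecting family of $(t+2)$-sets. A $t$-intersecting $\mh\subseteq\binom{[n]}{t+2}$ is forced to be small only when $\tau_t(\mh)=t+2$. That case is Frankl's theorem, used in Proposition \ref{pr1-3}: $|\mh|\le\binom{t+4}{2}$, with equality iff $\mh=\binom{Z}{t+2}$ for $t\ge 2$. When $\tau_t(\mh)\in\{t,t+1\}$, some $t$-intersecting families have size growing with $n$. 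Examples are all $(t+2)$-sets through a fixed $t$-set, or all $(t+2)$-sets meeting a fixed $(t+2)$-set $W$ in at least $t+1$ points.

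So the decisive bounds, with their equality structures, must come from the $k$-sets of $\mf$ that witness $\tau_t(\mf)>t+1$. These bounds are $|\ttf|\le(k-t)(k-t+1)+1$ when $\tttf=t$, and, when $\tttf=t+1$, $|\ttf|$ strictly below the largest threshold unless $\ttf$ has the Construction \ref{con-2} shape.
\begin{itemize}
\item In Proposition \ref{pr1-1}, every cover is $T\cup\{v_1,v_2\}$. The points $v_1,v_2$ are constrained by the members of $\mf\setminus\mf_T$, each meeting $T$ in $t-1$ or $t-2$ points. A case analysis of how these members overlap gives the bound and the configuration $\{G_1,G_2,G_3\}$.
\item In Proposition \ref{pr1-2}, one takes $U_0\in\mmu_t(\mf)$ and $F_0\in\mf$ with $|U_0\cap F_0|=t-1$. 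Either all covers lie in $U_0\cup F_0$ or $|\ttf|$ is small. The pairwise cross-intersecting links $\mv_i'\subseteq\binom{F_0\setminus U_0}{2}$ are then bounded by Theorem \ref{2601192}.
\end{itemize}
None of this is in your proposal. The ``classification'' you invoke (common $t$-set, common $(t+1)$-set, $\binom{Z}{t+2}$, sporadic types) does not yield any of these numbers.

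Your endgame also differs from the paper's, and it cannot be carried out with your Step 1 estimate. For a maximum $\mf$, you have $|\mf|\ge\max_i f_i(n,k,t)$, and by Lemma \ref{bigf}, $|\mf|\le|\ttf|\binom{n-t-2}{k-t-2}+(k-t+1)^3\binom{t+2}{2}\binom{n-t-3}{k-t-3}$. Comparing these with part (iii) of Lemmas \ref{con1-maximal}--\ref{con3-maximal} gives $|\ttf|\ge\max\{(k-t)(k-t+1)+1,(t+2)(k-t)+1,\binom{t+4}{2}\}$ \emph{simultaneously}. It does not give ``$|\ttf|\ge\binom{t+4}{2}$, otherwise the $f_1/f_2$ regime.'' The case bounds on $|\ttf|$ then force equality and a rigid structure. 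Maximality of $\mf$, via Lemmas \ref{cor-1}--\ref{cor-3}, identifies $\mf$ with a construction, with no second-order counting.

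Your Step 3 instead asks for ``$f_i$ minus a correction'' in every configuration. Configurations with the same $|\ttf|$ differ only at order $\binom{n-t-3}{k-t-3}$, the same order as your Step 1 error term, so your count cannot separate them. You would need exact equality characterizations anyway.

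Two smaller points. First, $\ttf$ is $t$-intersecting because a maximum $\mf$ is maximal. Hence every $k$-superset of a $t$-cover lies in $\mf$, and supersets of $T,T'$ meeting only in $T\cap T'$ give a contradiction for $n\ge 2k$; no counting is involved. Second, your Step 0 bound with the factor $\binom{k}{t}$ is too weak at $n\approx\binom{t+3}{2}(k-t+1)^4$; take $k=2t$ with $t$ large. Branch from a minimum cover instead to get $\binom{\tau}{t}(k-t+1)^{\tau-t}\binom{n-\tau}{k-\tau}$.
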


We remark that none of the candidate extremal structures in Theorem \ref{main} are redundant. For example, when $n$ is sufficiently large, the largest families come from Constructions \ref{con-1}, \ref{con-2} and \ref{con-3} when $(k,t)=(14,6)$, $(k,t)=(12,6)$ and $(k,t)=(10,6)$, respectively.

The rest of this paper is organized as follows. In Section \ref{S3}, we mainly investigate minimum $t$-covers of maximal $t$-intersecting families with $t$-covering number $t+2$.  Subsequently, Theorem \ref{main} is proved in Section \ref{S4}.  The proofs of  Lemmas \ref{con1-maximal}, \ref{con2-maximal} and \ref{con3-maximal}, which establish some properties of the families described in Constructions \ref{con-1}--\ref{con-3},  are collected in Section \ref{verify}.

\section{$t$-intersecting families with $t$-covering number $t+2$}\label{S3}

For a $t$-intersecting family $\mf\subseteq{[n]\c k}$ and $A\subseteq[n]$, write $\mf_A=\{F\in\mf: A\subseteq F\}$ and
$$\ttf=\left\{S\subseteq[n]: S\ \mbox{is a}\ t\mbox{-cover of}\ \mf\ \mbox{with size}\ \tf\right\}.$$

\begin{lem}{\rm(\cite[Lemma 2.1]{SHMC})}\label{CST}
Suppose $\mf\subseteq{[n]\c k}$ is a maximal $t$-intersecting family. If $n\ge2k$,  then $\ttf$ is also $t$-intersecting.
\end{lem}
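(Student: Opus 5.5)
The plan is a direct argument by contradiction. Suppose two minimum $t$-covers $S,S'\in\ttf$ satisfy $|S\cap S'|\le t-1$. From them I will build a $k$-set $G\notin\mf$ that still meets every member of $\mf$ in at least $t$ elements. Then $\mf\cup\{G\}$ is $t$-intersecting and strictly larger than $\mf$, which contradicts maximality. Write $\tau=\tf$. Each minimum cover has size $\tau\le k$: any $F\in\mf$ is itself a $t$-cover because $\mf$ is $t$-intersecting.

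The key observation is that minimality of $S$ gives a witness for each of its points. For each $x\in S$, the set $S\setminus\{x\}$ is not a $t$-cover. So there is $F_x\in\mf$ with $|F_x\cap S|=t$ and $x\in F_x$; any $F\in\mf$ with $|F\cap S|\ge t+1$ would still meet $S\setminus\{x\}$ in at least $t$ elements, so it cannot be the witness. The same holds for $S'$.

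The construction is $G\supseteq S$, padded to size $k$ using elements outside $\bigcup_{F\in\mf}F$. Such elements exist only if the union is not all of $[n]$, which need not be the case. So the more robust route works inside a single member. Fix $F_0\in\mf$ with $|F_0\cap S'|=t$, which exists by the observation above. Then consider $k$-sets $G$ with $S\subseteq G$ chosen to meet $S'$ in fewer than $t$ elements. Any such $G$ has $|G\cap F|\ge|S\cap F|\ge t$ for every $F\in\mf$. If $G\notin\mf$, maximality is violated. Hence every $k$-set containing $S$ lies in $\mf$.

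Using $n\ge 2k$ and $|S|=\tau\le k$, I can choose a $k$-set $G\supseteq S$ with $G\cap(S'\setminus S)=\emptyset$. This needs $n-|S\cup S'|\ge k-\tau$, which holds since $|S\cup S'|\le 2\tau$ and $n\ge 2k$. This $G$ satisfies $|G\cap S'|=|S\cap S'|\le t-1$, so $S'$ is not a $t$-cover of $\mf$ once $G\in\mf$. That contradicts $S'\in\ttf$, and therefore $|S\cap S'|\ge t$.

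The main obstacle is justifying that such a $G$ belongs to $\mf$. This is the maximality step: $G$ $t$-intersects all of $\mf$ because it contains the $t$-cover $S$. It is also worth a quick check that adding $G$ keeps the family $t$-intersecting with itself, which is trivial since $|G|=k\ge t$. The counting for the existence of $G$ is routine.
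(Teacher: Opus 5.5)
Your argument is correct and is the standard proof behind the cited \cite[Lemma 2.1]{SHMC}. By maximality, every $k$-set containing a minimum $t$-cover $S$ lies in $\mf$, and $n\ge 2k$ lets you pick such a $k$-set $G$ with $G\cap S'=S\cap S'$, which forces $|S\cap S'|\ge t$. The witness observation (the sets $F_x$), the set $F_0$, and the abandoned idea of padding outside $\bigcup_{F\in\mf}F$ are never used and can be deleted.
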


Let $\mf\subseteq{[n]\c k}$ be a maximal $t$-intersecting family with $\tf=t+2$.  
By Lemma \ref{CST}, we have $\tttf\in\{t,t+1,t+2\}$. We divide our investigation into three cases.

\subsection{The case $\tttf=t$}

The family described in Construction \ref{con-1} is an example of this case. We present some properties of such a family here, and verify them in Section \ref{verify-1}.

\begin{lem}\label{con1-maximal}
	Let $n$, $k$, $t$, $T$, $A$, $B$, $C$, $u$, $G_1$, $G_2$, $G_3$ and $\mf$ be as in Construction \ref{con-1}.
	The following hold.
	\begin{itemize}
		\item[\rm(\ro1)]	$\mf$ is a maximal $t$-intersecting subfamily of ${[n]\c k}$.
		\item[\rm(\ro2)]    $\tf=t+2$, $\tttf=t$ and $|\ttf|=(k-t)(k-t+1)+1$.
		\item[\rm(\ro3)]	$|\mf|=f_1(n,k,t)>((k-t)(k-t+1)+1){n-t-2\c k-t-2}-(k-t)(2(k-t)^2+1){n-t-3\c k-t-3}$.
	\end{itemize}
\end{lem}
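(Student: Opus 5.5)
The plan is to verify the three parts directly from the explicit description. Throughout, write $R=T\cap A$ (so $|R|=t-1$), $T=R\cup\{x\}$ and $A=R\cup\{y\}$, so that $A\cup T=R\cup\{x,y\}$ has size $t+1$. Let $\mf^{(1)}$ and $\mf^{(2)}$ denote the first two subfamilies in Construction \ref{con-1}.

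For (\ro1), $t$-intersection is a routine check over all types of pairs.
\begin{itemize}
\item Two members of $\mf^{(1)}\cup\mf^{(2)}$ share $T$.
\item $G_1$ meets a member of $\mf^{(1)}$ in $R$ plus a point of $B$, and contains $A$, which lies in every member of $\mf^{(2)}$. The same holds for $G_2$ with $C$ in place of $B$.
\item $G_3$ meets a member of $\mf^{(1)}$ in $R$ plus a point of $B$, and a member of $\mf^{(2)}$ in $R$ plus a point of $B\cup\{u\}$.
\item Among the $G_i$, the pairwise intersections contain $A$, $R\cup B$ and $R\cup\{u\}$ respectively.
\end{itemize}
For maximality, take any $G\in{[n]\c k}$ that $t$-intersects every member of $\mf$, and show $G\in\mf$. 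If $|G\cap T|\le t-1$, choose $F\in\mf^{(1)}$ containing $T$ and otherwise disjoint from $G$ (possible since $n>2k$); then $|G\cap F|<t$, a contradiction. Hence $T\subseteq G$. If $y\notin G$, then $G$ must meet $B$ (test against $G_2$-type members and $G_3$) and $C$ (test against $G_1$), so $G\in\mf^{(1)}$. If $y\in G$, then $G$ must meet $B\cup\{u\}$ because of $G_3$, so $G\in\mf^{(2)}$. The only delicate subcase is when $G$ is one of the $G_i$ themselves, and those are in $\mf$ anyway.

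For (\ro2), first note that $\tau_t(\mf)\le t+2$ and that every member of $\ttf$ has size $t+2$. I will identify $\ttf$ exactly as
$$\{T\cup\{b,c\}: b\in B,\ c\in C\}\ \cup\ \{T\cup\{y,b'\}: b'\in B\cup\{u\}\}.$$
These are $(k-t)^2+(k-t+1)=(k-t)(k-t+1)+1$ sets, and each is easily checked to be a $t$-cover.

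For the converse, and at the same time to show $\tf\ge t+2$, let $S$ be a $t$-cover with $|S|\le t+2$. Since $\mf^{(1)}$ contains sets meeting $A\cup T$ exactly in $T$ and otherwise avoiding any prescribed small set, $S$ must contain $T$. The at most two remaining points must then handle $G_1,G_2,G_3$ and the members of $\mf^{(1)}$ that avoid a given point of $B$ or of $C$. A short case analysis on whether $y\in S$ forces exactly the listed shapes, and shows that no $(t+1)$-set works. Since every minimum cover contains $T$ and these covers are not all contained in a common larger set, $\tttf=t$.

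For (\ro3), the size is computed by splitting on $y$.
\begin{itemize}
\item The members of $\mf^{(1)}$ contain $T$, avoid $y$ and meet both $B$ and $C$; by inclusion--exclusion there are ${n-t-1\c k-t}-2{n-k-1\c k-t}+{n-2k+t-1\c k-t}$ of them.
\item The members of $\mf^{(2)}$ contain $A\cup T$ and meet $B\cup\{u\}$; there are ${n-t-1\c k-t-1}-{n-k-2\c k-t-1}$ of them.
\end{itemize}
Pascal's rule, applied in the forms ${n-t-1\c k-t}+{n-t-1\c k-t-1}={n-t\c k-t}$ and ${n-k-1\c k-t}={n-k-2\c k-t}+{n-k-2\c k-t-1}$, turns the sum plus $3$ into $f_1(n,k,t)$.

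For the lower bound, observe that every $k$-set containing some $S\in\ttf$ lies in $\mf^{(1)}\cup\mf^{(2)}$. Hence, with $m=(k-t)(k-t+1)+1$, Bonferroni gives
$$|\mf|\ge m{n-t-2\c k-t-2}-\sum_{\{S,S'\}}{n-|S\cup S'|\c k-|S\cup S'|}.$$
I will count the pairs with $|S\cap S'|=t+1$ (union of size $t+3$) by type:
\begin{itemize}
\item grid pairs sharing $b$ or $c$ give $(k-t)^2(k-t-1)$;
\item pairs within the $y$-type give ${k-t+1\c 2}$;
\item mixed pairs give $(k-t)^2+(k-t)$.
\end{itemize}
The sum of these is at most $(k-t)(2(k-t)^2+1)$. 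All remaining pairs have union of size $t+4$, and their total contribution $O(m^2){n-t-4\c k-t-4}$ must be absorbed.

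This absorption is the main obstacle: the estimate has to be strict and should hold only under $n>2k$. I will try two ways to get it. The first is to use the slack between the exact pair count and $(k-t)(2(k-t)^2+1)$, which has order $(k-t)^3$, together with the ratio ${n-t-4\c k-t-4}/{n-t-3\c k-t-3}=\frac{k-t-3}{n-t-3}$. If that fails for small $n$, I will instead compare the exact formula for $f_1$ directly with the right-hand side, using the expansions ${n-t\c k-t}=\sum_j{2\c j}{n-t-2\c k-t-j}$ and their analogues; the $+3$ term and the positive term ${n-2k+t-1\c k-t}$ supply the strictness.
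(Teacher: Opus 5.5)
Your proposal has two genuine gaps. One is in the maximality argument in (i), and it recurs in (ii). The other is in the inequality in (iii).

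\textbf{Maximality in (i).} The argument rests on a false step. You claim that if $|G\cap T|\le t-1$ one can choose $F\in\mf^{(1)}$ containing $T$ and otherwise disjoint from $G$. But every member of $\mf^{(1)}$ meets both $B$ and $C$, so no such $F$ exists once $B\subseteq G$ or $C\subseteq G$. Sets $G\notin\mf$ of this kind are far from being only $G_1,G_2,G_3$.

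For example, take $t\ge 2$, $r\in R$ and $G=(R\setminus\{r\})\cup\{x\}\cup B\cup\{u\}$. Then $|G\cap T|=t-1$ and $G\notin\mf$. Yet $G$ meets every member of $\mf^{(1)}\cup\mf^{(2)}$, as well as $G_1$ and $G_3$, in at least $t$ points. The only obstruction is $|G\cap G_2|=t-1$.

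So the case $T\not\subseteq G$ needs a real case analysis, and this is the bulk of the paper's proof of (i):
\begin{itemize}
\item If $G\cap B=\emptyset$: use $G_1$ when $A\not\subseteq G$; use $T\cup(B\setminus\{p\})\cup\{q\}\in\mf^{(1)}$ with $q\in C\setminus G$ when $C\not\subseteq G$; otherwise $G=G_2$. The case $G\cap C=\emptyset$ is symmetric.
\item If $G$ meets $B$ and $C$ but contains neither, your avoiding argument works.
\item If $C\subseteq G$, use $(A\cup T)\cup(B\setminus\{b\})\in\mf^{(2)}$ with $b\in G\cap B$.
\item If $B\subseteq G$ and $z\in G\cap(C\setminus\{u\})$, use $(A\cup T)\cup(C\setminus\{z\})\in\mf^{(2)}$.
\item If $B\subseteq G$ and $G\cap C=\{u\}$, then $G\neq G_3$ forces $R\not\subseteq G$. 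Use $T\cup(C\setminus\{u\})\cup\{b\}\in\mf^{(1)}$ when $|G\cap T|\le t-2$, and $G_2$ when $|G\cap T|=t-1$.
\end{itemize}

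The same ``avoid any prescribed small set'' claim is your only justification for $T\subseteq S$ in (ii). There it fails only when $k-t=3$ and $S=(T\setminus\{z\})\cup B$ or $S=(T\setminus\{z\})\cup C$. These are excluded by $G_2$ and $G_1$ respectively, but this must be said. A minor slip: when $T\subseteq G$ and $y\notin G$, it is $G_1$ that forces $G\cap B\neq\emptyset$ and $G_2$ that forces $G\cap C\neq\emptyset$, not the other way round.

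\textbf{The inequality in (iii).} The Bonferroni route does not prove the inequality for all $n>2k$, which is what the lemma asserts. Your bound is exactly $m\binom{n-t-2}{k-t-2}-N_3\binom{n-t-3}{k-t-3}-N_4\binom{n-t-4}{k-t-4}$, where $N_3=(k-t)^3+\frac{(k-t)(k-t+3)}{2}$ and $N_4=\binom{m}{2}-N_3$. The slack $(k-t)(2(k-t)^2+1)-N_3$ has order $(k-t)^3$. Against it you must absorb $N_4\binom{n-t-4}{k-t-4}=N_4\frac{k-t-3}{n-t-3}\binom{n-t-3}{k-t-3}$, whose coefficient has order $(k-t)^5/n$. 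So the argument breaks down for $n$ near $2k$. For instance, at $(n,k,t)=(17,8,1)$ you would need $1218\binom{12}{3}<315\binom{13}{4}+3$, that is $267960<225228$, which is false. Your fallback is only announced, not carried out.

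The paper avoids inclusion--exclusion entirely:
\begin{itemize}
\item The members of $\mf^{(2)}$ meeting $B\cup\{u\}$ in exactly one point number $(k-t+1)\binom{n-k-2}{k-t-2}$.
\item The members of $\mf^{(1)}$ meeting each of $B$ and $C$ in exactly one point number $(k-t)^2\binom{n-2k+t-1}{k-t-2}$.
\item The estimate $\binom{N-s}{r}\ge\binom{N}{r}-s\binom{N-1}{r-1}$, with $N=n-t-2$, $r=k-t-2$, and $s=k-t$ or $s=2(k-t)-1$ respectively, then gives exactly the stated right-hand side for every $n$, because $(k-t+1)(k-t)+(k-t)^2(2(k-t)-1)=(k-t)(2(k-t)^2+1)$.
\item Strictness comes from $G_1,G_2,G_3$.
\end{itemize}

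The rest is correct: the $t$-intersecting check, the computation $|\mf|=f_1(n,k,t)$, and the description of $\ttf$ once $T\subseteq S$ is established.
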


The following proposition establishes an upper bound for $|\ttf|$.

\begin{pr1}\label{pr1-1}
	Suppose $n>2k$, $k\ge t+3$ and  $\mf$ is a maximal $t$-intersecting subfamily of ${[n]\c k}$ with $\tf=t+2$. If each member of $\ttf$ contains $T\in{[n]\c t}$, then
	$$|\ttf|\le(k-t)(k-t+1)+1.$$
	Moreover, if equality holds, then $\mf\bs\mf_T=\{G_1,G_2,G_3\}$ with
	$$G_1=A\cup B,\quad G_2=A\cup C,\quad G_3=(T\cap A)\cup B\cup\{u\},$$
	where
	$u\in C$, and $A,B,C$ are pairwise disjoint subsets of $[n]$ with $|A|=t$, $|B|=|C|=k-t$, $|T\cap A|=t-1$ and $T\cap(B\cup C)=\emptyset$.
\end{pr1}
\begin{proof}
	Pick $W\in\ttf$ and $F\in\mf\bs\mf_T$. By $|W\cap F|\ge t$ and $|W|=t+2$, we have $|T\cap F|\ge t-2$. If $|T\cap F|=t-2$, then 
	$$\ttf\subseteq\left\{T\cup A: A\in{F\bs T\c 2}\right\}.$$
	This together with $k\ge t+3$ yields
	$$|\ttf|\le{k-t+2\c2}<(k-t)(k-t+1)+1.$$
	In the following, we may assume that $|T\cap G|=t-1$ for each $G\in\mf\bs\mf_T$.
	
	Since $\tf>t+1$, it is routine to check that $|\mf\bs\mf_T|\ge2$. Suppose  $\mf\bs\mf_T=\{G_1,G_2\}$.  
	Notice that $|T\cap G_1|=|T\cap G_2|=t-1$ and $(G_1\cap G_2)\bs T\neq\emptyset$. 
	For each $u\in (G_1\cap G_2)\bs T$, $T\cup\{u\}$ is a $t$-cover of $\mf$. This contradicts the assumption that $\tf=t+2$. Consequently there exist three distinct  members $G_1$, $G_2$ and $G_3$ in $\mf\bs\mf_T$.

	By $|T\cap G_1|=|T\cap G_2|=t-1$, we have
	$$|T\cap G_1\cap G_2|+2(t-1-|T\cap G_1\cap G_2|)\le t.$$
	Then $t-2\le|T\cap G_1\cap G_2|\le t-1$. 
	We divide our following proof into two cases, and set $\alpha=k-|G_1\cap G_2|$.

	\medskip
	\noindent{\bf Case 1.} $|T\cap G_1\cap G_2|=t-2$.
	\medskip
	
	In this case, we have
	$$|T\cap(G_1\cup G_2)|=|T\cap G_1|+|T\cap G_2|-|T\cap G_1\cap G_2|=t.$$
		Since $|T\cap G_1|=|T\cap G_2|=t-1$, there exist $u_1\in G_1\bs G_2$ and $u_2\in G_2\bs G_1$ such that $(T\cap G_1\cap G_2)\cup\{u_1,u_2\}=T$. Since $\tf>t+1$, for each $v\in(G_1\cap G_2)\bs T$, there exists $\varphi(v)\in\mf\bs\mf_T$ such that $v\not\in\varphi(v)$. 
	For convenience, write $\mg=(\mf\bs\mf_T)\bs\{G_1,G_2\}$, and $\Gamma(G)=(G\bs T)\cap(G_1\cap G_2)$, $\gamma(G)=|\Gamma(G)|$ for $G\in\mg$.


	Suppose $\alpha=1$. We claim that 
	 there exists $G_4\in\mg$ and $i\in\{1,2\}$ such that $|T\cap G_i\cap G_4|=t-2$ and $k-|G_i\cap G_4|\ge2$. 	
	From $\tf>t+1$, we obtain $\bigcap_{G\in\mg}\Gamma(G)=\emptyset$. To prove the claim, pick $G_4\in\mg$ such that $\Gamma(G_4)\neq(G_1\cap G_2)\bs T$. Since $\alpha=1$ and $|G_1\cap G_4|, |G_2\cap G_4|\ge t$, we have 
	$\Gamma(G_4)\neq\emptyset$. 
	Recall that $|T\cap G_4|=t-1$ and $(T\cap G_1\cap G_2)\cup\{u_1,u_2\}=T$. Then  $\{u_1,u_2\}\cap G_4\neq\emptyset$. W.l.o.g., suppose $u_1\in G_4$.  We have $(G_2\bs G_4)\cap T\neq\emptyset$ by $|T\cap G_2\cap G_4|=|(T\bs\{u_1\})\cap G_4|=t-2$, and obtain $(G_2\bs G_4)\cap((G_1\cap G_2)\bs T)\neq\emptyset$ from $\Gamma(G_4)\neq(G_1\cap G_2)\bs T$. Thus $|G_2\bs G_4|\ge2$, and the claim holds.

	By the claim above, to get an upper bound on $|\ttf|$, it suffices to consider the case $\alpha\ge2$. 
	Let
	$$\mp_1=\left\{T\cup\{v_1,v_2\}\in\ttf:v_1\in G_1\bs(G_2\cup\{u_1\}),\ v_2\in G_2\bs(G_1\cup\{u_2\})\right\}.$$
	We have $|\mp_1|\le(\alpha-1)^2$. 
	Recall that $\varphi(v)\in\mf\bs\mf_T$ and  $v\not\in\varphi(v)$	for $v\in(G_1\cap G_2)\bs T$. Set
	$$\mp_2=\left\{T\cup\{w_1,w_2\}\in\ttf:w_1\in(G_1\cap G_2)\bs T,\  w_2\in\varphi(w_1)\bs T\right\}.$$
	Then $|\mp_2|\le(k-\alpha-t+2)(k-t+1)$ and $\ttf\subseteq\mp_1\cup\mp_2$. 
	By $2\le\a\le k-t$, we further conclude that
	$$|\ttf|\le|\mp_1|+|\mp_2|\le(k-t)(k-t+1)+1-(\alpha-2)(k-t+1-\alpha)\le(k-t)(k-t+1)+1.$$

	Suppose $|\ttf|=(k-t)(k-t+1)+1$. Then $\alpha=2$, and $|\mp_2|=(k-t)(k-t+1)$.  We claim that, for each $G\in\mg$, $\Gamma(G)\neq\emptyset$. 
	Since $|T\cap G|=t-1$, we may suppose that $u_1\in G$. 
	If $u_2\not\in G$, then $T\cap G_1\cap G_2\subseteq G$, and by $|G\cap G_2|\ge t$, we have $\Gamma(G)\neq\emptyset$.  
	If $u_2\in G$, then $|T\cap G_1\cap G_2\cap G|=t-3$, and by $|G\cap G_2|\ge t$, we also have $\Gamma(G)\neq\emptyset$.

	Since $\tf>t+1$, we know $\bigcap_{G\in\mg}\Gamma(G)=\emptyset$, and may assume that $\Gamma(G_3)\neq(G_1\cap G_2)\bs T$. Then $1\le\gamma(G_3)\le k-t-1$. Set 
	$$\mq_1=\left\{W\in\mp_2: W\cap\Gamma(G_3)=\emptyset\right\},\quad\mq_2=\left\{W\in\mp_2: W\cap\Gamma(G_3)\neq\emptyset\right\}.$$
	Observe that 
	\begin{equation*}
		\begin{aligned}
			\mq_1&\subseteq\{T\cup\{v_1,v_2\}: v_1\in(G_1\cap G_2)\bs(T\cup\Gamma(G_3)),\ v_2\in G_3\bs(T\cup\Gamma(G_3))\},\\
			\mq_2&\subseteq\{T\cup\{w_1,w_2\}: w_1\in\Gamma(G_3),\ w_2\in\varphi(w_1)\bs T\}.
		\end{aligned}
	\end{equation*}
	 Then 
	$$|\mq_1|\le(k-t-\gamma(G_3))(k-t+1-\gamma(G_3)),\quad|\mq_2|\le(k-t+1)\gamma(G_3).$$
	We further conclude that
	$$(k-t)(k-t+1)=|\mp_2|\le|\mq_1|+|\mq_2|\le(k-t)^2+1,$$
	a contradiction. Consequently, $|\ttf|<(k-t)(k-t+1)+1$.

	\medskip
	\noindent{\bf Case 2.} $|T\cap G\cap G'|=t-1$ for any $G,G'\in\mf\bs\mf_T$.
	\medskip
	
	In this case, for each $G\in\mf\bs\mf_T$, by $|T\cap G|=t-1$, we have 
	$$T\cap G=T\cap G\cap G_1=T\cap G_1=T\cap G_1\cap G_2=:S.$$
	Set 
	\begin{equation*}
		\begin{aligned}
			\mr_1&=\left\{R\in\ttf: R\cap G_1\cap G_2=S\right\},\\
			\mr_2&=\left\{R\in\ttf: |R\cap G_1\cap G_2|\in\{t,t+1\}\right\}.
		\end{aligned}
	\end{equation*}
	Observe that $\ttf=\mr_1\cup\mr_2$ and
	\begin{equation*}
		\begin{aligned}
			\mr_1&\subseteq\left\{T\cup U: U\in{G_1\Delta G_2\c 2},\ U\cap G_3\neq\emptyset,\ U\cap (G_i\bs G_j)\neq\emptyset,\ \{i,j\}=\{1,2\}\right\}.
		\end{aligned}
	\end{equation*}
	Write
	$$\b=|G_1\cap G_2\cap G_3|-t+1,\quad x=|G_3\cap(G_1\bs G_2)|,\quad y=|G_3\cap(G_2\bs G_1)|.$$
	Recall that $\a=k-|G_1\cap G_2|$. We have
	\begin{equation}\label{r1r3}
		\begin{aligned}
			|\mr_1|&\le\alpha^2-(\alpha-x)(\alpha-y)=\alpha(x+y)-xy.
		\end{aligned}
	\end{equation}
	Pick $R\in\mr_2$. 
	If $(R\cap G_1\cap G_2\cap G_3)\bs S=\emptyset$, then the number of such $R$ is at most $(k-\alpha-\beta-t+1)(k-\b-t+1)$. 
	If  $w\in (G_1\cap G_2\cap G_3)\bs S$ for some $w\in R$, then by $\tf>t+1$, there exists $G_5\in\mf\bs\mf_T$ with  $w\not\in G_5$, implying that the number of such $R$ is at most $\beta(k-t+1)$. 
	Therefore
	\begin{equation}\label{r2}
		\begin{aligned}
			|\mr_2|&\le(k-\alpha-\beta-t+1)(k-\b-t+1)+\beta(k-t+1).
		\end{aligned}
	\end{equation}
	By \eqref{r1r3} and \eqref{r2}, we have
		\begin{equation}\label{r3}
			|\ttf|=|\mr_1|+|\mr_2|\le(k-t+1)^2-\beta(k-t+1-\beta)-\alpha(k-t+1-\beta-x-y)-xy.
		\end{equation}

	\medskip
	\noindent{\bf Case 2.1.} $\b\neq0$.
	\medskip
	
	By $x+y+\b\le k-t+1$ and $1\le\b\le k-\a-t+1\le k-t$, we have $k-t+1-\beta-x-y\ge0$ and $\beta(k-t+1-\beta)\ge k-t$.  These together with \eqref{r3} yield
	$|\ttf|\le(k-t)(k-t+1)+1$, and if equality holds, then
	$$xy=0,\quad x+y+\b=k-t+1,\quad\b\in\{1,k-t\},$$
	implying that $G_3\in\{G_1,G_2\}$, a contradiction. Therefore $|\ttf|<(k-t)(k-t+1)+1$.
	
	\medskip
	\noindent{\bf Case 2.2.} $\b=0$.
	\medskip

	By $\b=0$ and $|G_3\cap G_1|,|G_3\cap G_2|\ge t$, we have $x,y\ge1$. Then $xy\ge x+y-1$. Observe that $x+y\le k-t+1$. 
	From \eqref{r3}, we further obtain
	\begin{equation*}
		\begin{aligned}
			|\ttf|&\le(k-t+1)^2-\alpha(k-t+1-x-y)-(x+y)+1\\
			&=(k-t+1)^2-\alpha(k-t+1)-(1-\alpha)(x+y)+1\\
			&\le(k-t+1)^2-(k-t+1)+1\\
			&=(k-t)(k-t+1)+1.
		\end{aligned}
	\end{equation*}
	Assume $|\ttf|=(k-t)(k-t+1)+1$. We have $xy=x+y-1$, and either $\alpha=1$ or $x+y=k-t+1$.
	
	Suppose $\a=1$. 
	Then $x=y=1$. 
	Set $G_1\bs G_2=\{p\}$ and $G_2\bs G_1=\{q\}$.  We have  $p,q\in G_3$. Moreover, it follows from \eqref{r2}, \eqref{r3} and $|\ttf|=(k-t)(k-t+1)+1$ that
$$\mr_2=\left\{T\cup\{w_1,w_2\}: w_1\in(G_1\cap G_2)\bs S,\ w_2\in\{p,q\}\cup(G_3\bs(G_1\cup G_2))\right\}.$$
Pick $G\in\mf\bs(\mf_T\cup\{G_1,G_2\})$. 
Note that, prior to the argument in Case 2.1, $G_3$ is an arbitrary member of $\mf\bs(\mf_T\cup\{G_1,G_2\})$. 
Therefore, if $|G\cap G_1\cap G_2|-t+1\neq0$, then by an argument similar to Case 2.1, we obtain $|\ttf|<(k-t)(k-t+1)+1$, a contradiction. 
Now $|G\cap G_1\cap G_2|=t-1$ and hence $G\cap(G_1\cap G_2)=S$. We have $p,q\in G$ by $|G\cap G_1|\ge t$ and $|G\cap G_2|\ge t$. 
It follows from the structure of $\mr_2$ that $G_3\bs(G_1\cup G_2)\subseteq G$. We further obtain $G=G_3$ and $\mf\bs\mf_T=\{G_1,G_2,G_3\}$. 
Write $A= G_1\cap G_3$, $B=(G_1\cap G_2)\bs S$ and $C=\{q\}\cup(G_3\bs(G_1\cup G_2))$. Notice that $S=T\cap A$. We have
$$G_1=A\cup B,\quad G_3=A\cup C,\quad G_2=(T\cap A)\cup B\cup\{q\},$$
as desired.

Suppose $\a\neq1$. We obtain
$$xy=x+y-1,\quad x+y=k-t+1,$$
implying that 
\begin{equation*}\label{g3}
	(x,y)\in\{(1,k-t),(k-t,1)\},\quad\a=k-t.
\end{equation*}
W.l.o.g., assume that $(x,y)=(k-t,1)$. We get $|G_1\cap G_2|=t$, and  $G_3=S\cup(G_1\bs G_2)\cup\{u\}$ for some $u\in G_2\bs G_1$.

	It follows from \eqref{r2}, \eqref{r3} and $|\ttf|=(k-t)(k-t+1)+1$ that 
	$$\mr_2=\{T\cup\{w_1,w_2\}: w_1\in(G_1\cap G_2)\bs S,\ w_2\in(G_1\bs G_2)\cup\{u\}\}.$$
	As in the case $\alpha=1$, an argument similar to Case 2.1 yields $S=G\cap(G_1\cap G_2)$ for any $G\in\mf\bs(\mf_T\cup\{G_1,G_2\})$. 
	This together with $T\cap G=S$ and $|R\cap G|\ge t$ for each $R\in\mr_2$  yields $(G_1\bs G_2)\cup\{u\}\subseteq G$. We further obtain $G=G_3$ and $\mf\bs\mf_T=\{G_1,G_2,G_3\}$. The desired result follows by taking $A=G_1\cap G_2$, $B=G_1\bs G_2$ and $C=G_2\bs G_1$.
\end{proof}

\begin{lem}\label{cor-1}
	Suppose $n>2k$, $k\ge t+3$ and  $\mf$ is a maximal $t$-intersecting subfamily of ${[n]\c k}$ with $\tf=t+2$ and $\tttf=t$. If $|\ttf|=(k-t)(k-t+1)+1$, then $\mf$ is a family described in Construction \ref{con-1}.
\end{lem}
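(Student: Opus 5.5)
Since $\tttf=t$ and $\ttf$ is $t$-intersecting by Lemma \ref{CST}, there is $T\in{[n]\c t}$ contained in every member of $\ttf$. Together with $|\ttf|=(k-t)(k-t+1)+1$, this puts us in the equality case of Proposition \ref{pr1-1}. We therefore get $\mf\bs\mf_T=\{G_1,G_2,G_3\}$ with $G_1=A\cup B$, $G_2=A\cup C$, $G_3=(T\cap A)\cup B\cup\{u\}$, where $u\in C$, the sets $A,B,C$ are pairwise disjoint, $|A|=t$, $|B|=|C|=k-t$, $|T\cap A|=t-1$ and $T\cap(B\cup C)=\emptyset$. These are exactly the data of Construction \ref{con-1}. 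Write $\mf^*$ for the family of Construction \ref{con-1} built from $T,A,B,C,u$. It remains to show $\mf=\mf^*$.

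First I show $\mf\subseteq\mf^*$. The three sets $G_i$ lie in $\mf^*$. Let $F\in\mf_T$. Write $A\cup T=(T\cap A)\cup\{a,b\}$, where $A\bs T=\{a\}$ and $T\bs A=\{b\}$. Then $F\cap(A\cup T)$ is either $T$ or $A\cup T$.

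\emph{Case $a\notin F$.} We have $|F\cap G_1|=t-1+|F\cap B|$, so $|F\cap G_1|\ge t$ forces $F\cap B\neq\emptyset$. Similarly $F\cap G_2$ forces $F\cap C\neq\emptyset$. Hence $F$ lies in the first part of $\mf^*$.

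\emph{Case $a\in F$.} Here $A\cup T\subseteq F$, so $|F\cap G_3|=t-1+|F\cap(B\cup\{u\})|$. Thus $F\cap(B\cup\{u\})\neq\emptyset$, and $F$ lies in the second part.

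So $\mf\subseteq\mf^*$.

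Next I show equality. By Lemma \ref{con1-maximal}(i), $\mf^*$ is $t$-intersecting. Since $\mf$ is maximal $t$-intersecting and $\mf\subseteq\mf^*$, we get $\mf=\mf^*$. The argument is essentially bookkeeping; the only point needing care is the case split on whether $F$ contains the element $a\in A\bs T$. All the hard work is already done in Proposition \ref{pr1-1}.
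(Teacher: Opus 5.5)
Your proof is correct and follows the paper's argument: you invoke the equality case of Proposition \ref{pr1-1}, show that $\mf$ is contained in the family of Construction \ref{con-1}, and conclude by the maximality of $\mf$ together with Lemma \ref{con1-maximal}. Your case split on whether $F\in\mf_T$ contains the element of $A\bs T$ spells out the containment that the paper states without detail. Citing Lemma \ref{CST} for the existence of $T$ is unnecessary, since $\tttf=t$ already gives a $t$-set contained in every member of $\ttf$.
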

\begin{proof}
	Assume that each member of $\ttf$ contains $T\in{[n]\c t}$. By Proposition \ref{pr1-1}, we know
	$\mf\bs\mf_T=\{G_1,G_2,G_3\}$ with
	$$G_1=A\cup B,\quad G_2=A\cup C,\quad G_3=(T\cap A)\cup B\cup\{u\},$$
	where
	$u\in C$, and $A,B,C$ are pairwise disjoint subsets of $[n]$ with $|A|=t$, $|B|=|C|=k-t$, $|T\cap A|=t-1$ and $T\cap(B\cup C)=\emptyset$. Hence $\mf$ is contained in a family described in Construction \ref{con-1}. 
	The desired result follows from the maximality of $\mf$ and Lemma \ref{con1-maximal}.
\end{proof}

\subsection{The case $\tttf=t+1$}

We first state some properties of the family described in Construction \ref{con-2}, which are proved in Section \ref{verify-2}. 

\begin{lem}\label{con2-maximal}
	Let $n$, $k$, $t$, $M$, $W$ and $\mf$ be as in Construction \ref{con-2}. The following hold.
	\begin{itemize}
		\item[\rm(\ro1)]	$\mf$ is a maximal $t$-intersecting subfamily of ${[n]\c k}$.
		\item[\rm(\ro2)]    $\tf=t+2$,  $\ttf=\{T\in{M\c t+2}: |T\cap W|\ge t+1\}$ and $\tttf=t+1$.
		\item[\rm(\ro3)]	$|\mf|=f_2(n,k,t)>((t+2)(k-t)+1){n-t-2\c k-t-2}-(t+2)(k-t)^2{n-t-3\c k-t-3}$.
	\end{itemize}
\end{lem}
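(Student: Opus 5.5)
The plan is to verify (i)--(iii) of Lemma \ref{con2-maximal} directly from the definition of $\mf$, splitting its members into three layers:
$$\mathcal{L}_0=\{F: W\subseteq F\},\quad \mathcal{L}_1=\{F: |F\cap W|=t+1,\ F\cap(M\bs W)\neq\emptyset\},\quad \mathcal{L}_2=\left\{F\in{M\c k}: |F\cap W|=t\right\}.$$

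For the $t$-intersecting property we check pairs of layers.
\begin{itemize}
\item Two members of $\mathcal{L}_0\cup\mathcal{L}_1$ each miss at most one point of $W$, so they share at least $t$ points of $W$.
\item A member of $\mathcal{L}_2$ and one of $\mathcal{L}_0$ share the $t$ points of $W$ lying in the $\mathcal{L}_2$-set.
\item For $F\in\mathcal{L}_1$ and $G\in\mathcal{L}_2$: $|F\cap G\cap W|\ge t-1$, and $G\supseteq M\bs W$ (since $|G|=k$, $|G\cap W|=t$ and $|M\bs W|=k-t$), so $F$ contributes an extra point from $M\bs W$.
\item Two members of $\mathcal{L}_2$ are $k$-subsets of the $(k+2)$-set $M$, so they meet in at least $k-2\ge t$ points.
\end{itemize}

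For maximality, take $H\notin\mf$ and produce a member of $\mf$ meeting $H$ in fewer than $t$ points.
\begin{itemize}
\item If $|H\cap W|\le t-1$, pick $F\in\mathcal{L}_0$ with $F\bs W$ disjoint from $H$; this uses $n>2k$.
\item If $|H\cap W|=t+1$ and $H\cap M=H\cap W$, pick $F\in\mathcal{L}_1$ missing a point of $H\cap W$, containing one point of $M\bs W$, and otherwise avoiding $H$. Then $|F\cap H|=t$, which is not yet enough, so refine the choice: take $F\in\mathcal{L}_2$ instead, missing two points of $H\cap W$. This gives $|F\cap H|=t-1$.
\item If $|H\cap W|=t$ and $H\not\subseteq M$, pick $F\in\mathcal{L}_1$ whose point of $W$ outside $F$ lies in $H$ and whose extra points avoid $H$. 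This gives intersection $t-1$.
\end{itemize}

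For (ii), first show that every $(t+1)$-set $S$ fails to be a $t$-cover.
\begin{itemize}
\item If $|S\cap W|\le t$, pick an $\mathcal{L}_0$- or $\mathcal{L}_1$-set avoiding the points of $S\bs W$ and missing a point of $S\cap W$; it meets $S$ in at most $t-1$ points.
\item If $S\subseteq W$, an $\mathcal{L}_2$-set dropping two points of $S$ meets $S$ in $t-1$ points.
\end{itemize}
Next, each $T\subseteq M$ with $|T|=t+2$ and $|T\cap W|\ge t+1$ is a $t$-cover: it meets $\mathcal{L}_0\cup\mathcal{L}_1$-sets in at least $t$ points of $W$, and $\mathcal{L}_2$-sets miss only two points of $M$. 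Conversely, let $R$ be a $t$-cover of size $t+2$.
\begin{itemize}
\item If $|R\cap W|\le t$, use an $\mathcal{L}_1$-set missing a point of $R\cap W$ and avoiding $R\bs W$ (possible since $M\bs W$ has $k-t\ge3$ points and $n$ is large); it meets $R$ in at most $t-1$ points.
\item If $R\not\subseteq M$, then $|R\cap W|\ge t+1$ would leave at most one point outside; take an $\mathcal{L}_1$-set missing a point of $R\cap W$ and avoiding the outside point. This again meets $R$ in at most $t-1$ points when $|R\cap W|=t+1$, and gives a contradiction; the case $R\supseteq W$ lies inside $M$ automatically.
\end{itemize}
So $\ttf$ is as claimed. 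Its common intersection is $W$ minus nothing forced: every $(t+1)$-subset of $W$ lies in some member, and $t+1$ points of $W$ arise as a common subset only when we fix the missed point. So the members pairwise meet in at least $t+1$ points, since each contains at least $t+1$ points of $W$ and any two such $(t+2)$-sets share at least $t$ points of $W$ plus possibly more. The cleanest route is to note that $W$ itself is a $(t+1)$-cover of $\ttf$ of size $t+2$; to get $t+1$ one needs to exhibit a $(t+1)$-set meeting every member in at least $t$ points. I expect this bookkeeping to be the main obstacle, and I would settle it by checking small candidate sets against the members $W$ and $(W\bs\{w\})\cup\{m\}$ to find such a $(t+1)$-set, and by using members of the form $(W\bs\{w\})\cup\{m\}$ with varying $w$ to show that $\tau_t(\ttf)>t$.

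For (iii), count the layers:
$$|\mathcal{L}_0|={n-t-2\c k-t-2},\quad |\mathcal{L}_1|=(t+2)\left({n-t-2\c k-t-1}-{n-k-2\c k-t-1}\right),\quad |\mathcal{L}_2|={t+2\c 2}.$$
For $\mathcal{L}_1$ we choose the omitted point of $W$ and then subtract the sets that avoid $M\bs W$. For the inequality, write
$${n-t-2\c k-t-1}-{n-k-2\c k-t-1}=\sum_{i=1}^{k-t}{n-t-2-i\c k-t-2},$$
and bound each summand below by ${n-t-2\c k-t-2}-i{n-t-3\c k-t-3}$ via the Pascal identity. Summing over $i$ gives a loss of at most $(k-t)^2{n-t-3\c k-t-3}$ per choice of the omitted point, and the $\mathcal{L}_2$ term makes the inequality strict.
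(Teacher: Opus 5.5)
Parts (i) and (iii) of your proposal are sound. Your telescoping lower bound for $|\mathcal{L}_1|$ is a valid alternative to the paper's count of sets meeting $M\setminus W$ in exactly one point. Part (ii), however, has one step that fails and one piece that is never proved.

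\textbf{The step that fails.} To show that every $t$-cover $R$ of size $t+2$ lies in $M$, you treat the case $|R\cap W|=t+1$, $R\not\subseteq M$ with an $\mathcal{L}_1$-set missing a point $w\in R\cap W$. Such a set still contains $(R\cap W)\setminus\{w\}$, so it meets $R$ in $t$ points, not $t-1$. In fact, as you note yourself in the positive direction, any set with $t+1$ points of $W$ meets every member of $\mathcal{L}_0\cup\mathcal{L}_1$ in at least $t$ points. So no witness can come from those layers.

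The layer that forces minimum covers into $M$ is $\mathcal{L}_2$. Take distinct $w_1,w_2\in R\cap W$ and set $G=(W\setminus\{w_1,w_2\})\cup(M\setminus W)\in\mathcal{L}_2$. Since the remaining point of $R$ lies outside $M$, we get $|G\cap R|=t-1$. This is exactly the correction you already made in your maximality argument, and it is what the paper does with $F_{10}$.

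There is a smaller slip in the same paragraph. When $|R\cap W|\le t-1$, $R$ may contain all of $M\setminus W$; this is possible once $t+2-|R\cap W|\ge k-t$, e.g.\ for $k=t+3$. Then an $\mathcal{L}_1$-set avoiding $R\setminus W$ need not exist. Use an $\mathcal{L}_0$-set avoiding $R\setminus W$ instead, as you did for $(t+1)$-sets.

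\textbf{The piece that is not proved.} You leave $\tau_t(\mathcal{T}_t(\mathcal{F}))=t+1$ as ``bookkeeping''. Your intermediate claim that members of $\mathcal{T}_t(\mathcal{F})$ pairwise meet in at least $t+1$ points is false: $(W\setminus\{w_1\})\cup\{m_1\}$ and $(W\setminus\{w_2\})\cup\{m_2\}$ with $w_1\neq w_2$, $m_1\neq m_2$ meet in exactly $t$ points. The missing argument is short:
\begin{itemize}
\item Every $U\in\binom{W}{t+1}$ is a $t$-cover of $\mathcal{T}_t(\mathcal{F})$. Each member $T$ has $|T\cap W|\ge t+1$, hence $|T\cap U|\ge (t+1)+(t+1)-(t+2)=t$.
\item A $t$-cover of size $t$ would have to lie in every member. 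But $W$ and all sets $(W\setminus\{w\})\cup\{m\}$ with $w\in W$, $m\in M\setminus W$ belong to $\mathcal{T}_t(\mathcal{F})$. So the common intersection of $\mathcal{T}_t(\mathcal{F})$ is contained in $\bigcap_{w\in W}(W\setminus\{w\})=\emptyset$.
\end{itemize}
Hence $\tau_t(\mathcal{T}_t(\mathcal{F}))=t+1$.
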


Our main goal in this subsection is to prove the following proposition.
\begin{pr1}\label{pr1-2}
	Suppose $n>2k$, $k\ge t+3$  and $\mf$ is a maximal $t$-intersecting subfamily of ${[n]\c k}$ with $\tf=t+2$ and $\tttf=t+1$. Then at least one of the following holds.
	\begin{itemize}
		\item[\rm{(\ro1)}] $|\ttf|<\max\left\{(k-t)(k-t+1)+1,(t+2)(k-t)+1,{t+4\c2}\right\}$.
			\item[\rm{(\ro2)}] There exist $M\in{[n]\c k+2}$ and $W\in{M\c t+2}$ such that
			$$\ttf=\left\{T\in{M\c t+2}: |T\cap W|\ge t+1\right\}.$$
	\end{itemize}
\end{pr1}

For two families $\ma$ and $\mb$ with $|A\cap B|\ge t$ for any $A\in\ma$ and $B\in\mb$, we say they are \emph{cross $t$-intersecting}. When $t=1$, we omit $t$. 
To prove Proposition \ref{pr1-2}, we need the following theorem.

\begin{thm}{\rm(\cite[Theorem 1.5]{2601191})} \label{2601192}
	Suppose								 $n\ge 2k$ and $r\ge 2$. If $\mf_{1}, \mf_{2}, \ldots ,\mf_{r}\subseteq \binom{[n]}{k}$ are non-empty pairwise cross intersecting families, then 
	$$\sum_{i=1}^{r}\left|\mf_{i}\right|\le \max\left\{ \binom{n}{k}- \binom{n-k}{k}+r-1, r\binom{n-1}{k-1}\right\}.$$
	Moreover, if $n> 2k$ and  $\binom{n}{k}- \binom{n-k}{k}+r-1< r\binom{n-1}{k-1}$, then equality holds if and only if $\mf_{1}=\mf_{2}=\cdots=\mf_{r}=\left\{F\in\binom{[n]}{k}: a\in F\right\}$ for some $a\in [n]$. 
\end{thm}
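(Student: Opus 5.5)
The approach is to reduce to lexicographic initial segments and then solve a numerical optimization. For $0\le x\le{n\c k}$, let $\mathcal{L}(x)$ denote the first $x$ sets of ${[n]\c k}$ in the lexicographic order.

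\textbf{Step 1: reduction to initial segments.} I would invoke Hilton's lemma, which I take from outside the paper rather than from its earlier sections: if $\ma,\mb\subseteq{[n]\c k}$ are cross intersecting, then so are $\mathcal{L}(|\ma|)$ and $\mathcal{L}(|\mb|)$. Applying it to every pair simultaneously, we may replace each $\mf_i$ by $\mathcal{L}(x_i)$ with $x_i=|\mf_i|\ge1$. This keeps the families non-empty and pairwise cross intersecting and leaves $\sum_i|\mf_i|$ unchanged.

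Order the families so that $x_1\ge x_2\ge\cdots\ge x_r$. Initial segments are nested, so $\mathcal{L}(x_j)\subseteq\mathcal{L}(x_2)$ for all $j\ge2$. Hence all the constraints reduce to two conditions:
\begin{itemize}
\item[(a)] $\mathcal{L}(x_1)$ and $\mathcal{L}(x_2)$ are cross intersecting;
\item[(b)] $\mathcal{L}(x_2)$ is itself intersecting, which needs $x_2\le{n-1\c k-1}$ because $n\ge 2k$.
\end{itemize}
In particular $\sum_i x_i\le x_1+(r-1)x_2$, so it suffices to maximize $x_1+(r-1)x_2$ under (a) and (b).

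\textbf{Step 2: the optimization.} Let $g(y)$ be the largest $x$ such that $\mathcal{L}(x)$ and $\mathcal{L}(y)$ are cross intersecting; explicitly, $g(y)$ is the number of $k$-sets that meet every member of $\mathcal{L}(y)$. We must bound $h(y)=g(y)+(r-1)y$ for $1\le y\le{n-1\c k-1}$. The two endpoints give the two terms of the maximum:
\begin{itemize}
\item $y=1$: $\mathcal{L}(1)=\{[k]\}$, so $g(1)={n\c k}-{n-k\c k}$ and $h(1)={n\c k}-{n-k\c k}+r-1$;
\item $y={n-1\c k-1}$: $\mathcal{L}(y)$ is the star at $1$, so $g(y)={n-1\c k-1}$ and $h(y)=r{n-1\c k-1}$.
\end{itemize}

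\textbf{Main obstacle: interior values of $y$.} The hardest step is showing that $h$ is maximized at an endpoint. I would first try to prove that $h$ is ``quasi-convex'' along the natural breakpoints of the lex order. Concretely, write $\mathcal{L}(y)$ in terms of its lex structure (sets containing $\{1,\dots,j\}$, and so on). The key estimate to aim for is that, as $y$ moves between consecutive breakpoints $y={n-j\c k-j}$ type values, the decrease in $g$ is a convex function of $y$. Then $h$, being a convex function plus a linear term, attains its maximum at the ends of each piece. One then compares the breakpoint values directly using $n\ge2k$, in the spirit of Frankl--Kupavskii's treatment of the two-family case $r=2$, which is the Hilton--Milner-type bound ${n\c k}-{n-k\c k}+1$. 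If convexity fails, the fallback is to prove directly, for each breakpoint $y_0$, that $h(y_0)\le\max\{h(1),h({n-1\c k-1})\}$ via the identity ${n\c k}-{n-k\c k}=\sum_{i=1}^{k}{n-i\c k-1}$.

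\textbf{Step 3: uniqueness.} Assume $n>2k$ and ${n\c k}-{n-k\c k}+r-1<r{n-1\c k-1}$. Then the strict version of Step 2 forces $x_1=\cdots=x_r={n-1\c k-1}$, since equality in $\sum_i x_i\le x_1+(r-1)x_2$ also requires $x_3=\cdots=x_r=x_2$. It remains to show that the original families are stars at a common point. For $n>2k$ a cross-intersecting pair with $|\mf_1|=|\mf_2|={n-1\c k-1}$ must satisfy $\mf_1=\mf_2=$ a star. I would prove this by the standard Katona cycle argument, or by citing the uniqueness in Hilton's inequality $|\ma|+|\mb|\le 2{n-1\c k-1}$ for cross-intersecting families with both sizes at least ${n-1\c k-1}$. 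Applying this to each pair $(\mf_1,\mf_j)$ gives $\mf_j=\mf_1$ for all $j$, so all the $\mf_i$ are the same star, which is the claimed extremal structure.
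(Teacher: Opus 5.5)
You have a genuine gap in Step 2, which is where the whole theorem lives. Your Step 1 is correct. Hilton's lemma applies pairwise, and condition (b) in fact follows from (a), because $\mathcal{L}(x_2)\subseteq\mathcal{L}(x_1)$. Your description of $g$ and the endpoint values $h(1)$ and $h\bigl(\binom{n-1}{k-1}\bigr)$ are also right.

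The trouble is the interior values of $y$. By Hilton's lemma, the bound $h(y)\le\max\{h(1),h(\binom{n-1}{k-1})\}$ for all $1\le y\le\binom{n-1}{k-1}$ is equivalent to the weighted inequality
$$|\ma|+(r-1)|\mb|\le\max\left\{\binom{n}{k}-\binom{n-k}{k}+r-1,\ r\binom{n-1}{k-1}\right\}$$
for all cross-intersecting $\ma,\mb\subseteq\binom{[n]}{k}$ with $1\le|\mb|\le\binom{n-1}{k-1}$. This is a size-sensitive estimate of Frankl--Kupavskii type, and it is the substantive input behind the result the paper imports from \cite{2601191}; the paper gives no proof of its own. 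Your proposal names this as the ``main obstacle'' and lists strategies, but proves nothing for interior $y$.

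The strategy you sketch also does not work as stated. It would suffice to show that the graph of $g$ on $[1,\binom{n-1}{k-1}]$ lies below the chord joining its endpoint values, since then $h$ is dominated by a linear function of $y$. Convexity of $g$ would give this, but $g$ is not convex, not even between consecutive breakpoints $\binom{n-j}{k-j}$.

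Take $k=3$ and $n\ge 7$. On $[n-2,\binom{n-1}{2}]$ the sets $\{1,3,4\},\{1,3,5\},\{1,3,6\},\dots,\{1,3,n\},\{1,4,5\},\{1,4,6\},\dots$ enter $\mathcal{L}(y)$ in turn, and the successive drops of $g$ are
$$\binom{n-4}{2},\ n-5,\ 1,\ 0,\ \dots,\ 0,\ n-5,\ 1,\ 0,\ \dots.$$
So for $2\le r<n-4$, $h$ has a strict interior local maximum at $y=2n-5$, the end of the block $\{1,3,\cdot\}$. Hence $h$ is not even quasi-convex on that piece. For general $k$ the lexicographic blocks are nested, and such local maxima occur at many levels. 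Showing that every one of them is dominated by an endpoint value is exactly the nontrivial estimate; ``compare the breakpoint values directly'' only restates it.

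Step 3 inherits the gap. Forcing $x_2=\binom{n-1}{k-1}$ requires the strict inequality $h(y)<r\binom{n-1}{k-1}$ for every $y<\binom{n-1}{k-1}$, which is the same missing estimate. Once that is available, your last structural step goes through if you invoke the equality case, for $n>2k$, of the Matsumoto--Tokushige bound $|\ma||\mb|\le\binom{n-1}{k-1}^2$ for cross-intersecting families.
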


\begin{re}
	Since $\max\left\{ \binom{n}{k}- \binom{n-k}{k}+r-1, r\binom{n-1}{k-1}\right\}$ is  increasing as  $r$   increases,   the upper bound in Theorem \ref{2601192} remains valid if at least two of  $\mf_{1}, \mf_{2}, \ldots ,\mf_{r}$ are non-empty. 
\end{re}

In the following, denote the family of all minimum $t$-covers of $\mt_{t}(\mf)$ by $\mmu_{t}(\mf)$.

\begin{lem}\label{2601161}
	Suppose $n>2k$, $k\ge t+3$, $t\ge2$  and $\mf$ is a maximal $t$-intersecting subfamily of ${[n]\c k}$ with $\tf=t+2$ and $\tttf=t+1$. If $|U\cap F|\le t-2$ for some $U\in \mmu_{t}(\mf)$ and  $F\in \mf$, then $|\ttf|<(t+2)(k-t)+1$.
\end{lem}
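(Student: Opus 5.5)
Fix $U\in\mmu_t(\mf)$ and $F\in\mf$ with $|U\cap F|\le t-2$. Since $\tttf=t+1$, we have $|U|=t+1$, and every $W\in\ttf$ satisfies $|W\cap U|\ge t$ (because $U$ is a minimum $t$-cover of $\ttf$). Every $W\in\ttf$ also satisfies $|W\cap F|\ge t$. Write $W=(W\cap U)\cup(W\bs U)$ with $|W\bs U|\le2$, and split $\ttf$ into two classes.
\begin{itemize}
\item[(a)] $\ma=\{W\in\ttf:U\subseteq W\}$. Here $W=U\cup\{w\}$ for a single element $w$.
\item[(b)] $\mb=\{W\in\ttf:|W\cap U|=t\}$. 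Here $W=(U\bs\{u\})\cup\{w_1,w_2\}$ with $u\in U$ and $w_1,w_2\notin U$.
\end{itemize}
In both classes the condition $|W\cap F|\ge t$ forces the new elements to come from $F$.

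For (a), we need $|(U\cup\{w\})\cap F|\le t-1$, so no $W\in\ma$ exists. Thus $\ma=\emptyset$.

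For (b), we need $|(U\bs\{u\})\cap F|+|\{w_1,w_2\}\cap F|\ge t$. Since $|U\cap F|\le t-2$, this forces $|U\cap F|=t-2$, $u\in U\bs F$, and $w_1,w_2\in F\bs U$. Hence
$$\ttf\subseteq\left\{(U\bs\{u\})\cup\{w_1,w_2\}:\ u\in U\bs F,\ \{w_1,w_2\}\in{F\bs U\c 2}\right\}.$$
Here $|U\bs F|=3$ and $|F\bs U|=k-t+2$.

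This crude count gives $|\ttf|\le 3{k-t+2\c2}$. That is roughly quadratic in $k-t$, while the target $(t+2)(k-t)+1$ is linear in $k-t$, so the estimate must be sharpened by bringing in a second member of $\mf$. For each $u\in U\bs F$, let $\ms_u=\{P\in{F\bs U\c2}:(U\bs\{u\})\cup P\in\ttf\}$. Since $\tf=t+2>t+1$, the set $U\bs\{u\}$ is not... rather, $(U\bs\{u\})\cup\{w\}$ is not a $t$-cover for any single $w$. Therefore there is $G_u\in\mf$ with $|G_u\cap(U\bs\{u\})|\le t-2$; this is where $t\ge2$ is used. Every $P\in\ms_u$ must then satisfy $|P\cap G_u|\ge t-|G_u\cap(U\bs\{u\})|\ge2$ whenever $G_u$ misses enough of $U$. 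In the extreme case this gives $P\subseteq G_u\cap(F\bs U)$; in general it forces $P$ to meet $G_u$.

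A pair family that is forced to meet a fixed set of size at most $k-t+1$ inside $F\bs U$ has size at most about $(k-t+1)+{\cdot}$. More precisely, I plan to split according to $|G_u\cap(U\bs\{u\})|\in\{t-2,t-1\}$. In the first case $P\subseteq G_u\cap F$. In the second case $P$ meets $G_u$, and I will combine this with the requirement that $\ttf$ is $t$-intersecting (Lemma \ref{CST}). Two members $(U\bs\{u\})\cup P$ and $(U\bs\{u'\})\cup P'$ with $u\ne u'$ share only $t-1$ elements of $U$, so $P\cap P'\neq\emptyset$. For a single $u$, members share $t$ elements of $U$ automatically. Consequently, if at least two of the families $\ms_u$ are non-empty, they are pairwise cross-intersecting families of $2$-subsets of the $(k-t+2)$-set $F\bs U$.

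Apply Theorem \ref{2601192}, via the Remark after it, with $k=2$, $r=3$ and ground set size $k-t+2\ge5$. This gives $\sum_u|\ms_u|\le\max\{{k-t+2\c2}-{k-t\c2}+2,\ 3(k-t+1)\}=3(k-t+1)$, which is at most $(t+2)(k-t)$ for $t\ge2$ and $k\ge t+3$; the strictness comes from the cases $t\ge2$, and I will check the boundary case $t=2$ via the extra constraint from $G_u$. If only one $\ms_u$ is non-empty, the $G_u$ constraint bounds $|\ms_u|$ linearly by about $k-t+1$, or by ${k-t+1\c2}$ in the subcase $P\subseteq G_u$. I will have to refine that subcase using $|G_u\cap F|\ge t$ together with a second missing cover.

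I expect the main obstacle to be this single-$u$ subcase, and the equality boundary at $t=2$ where $3(k-t+1)$ and $(t+2)(k-t)+1$ nearly coincide. To handle it, I would first exploit that $\ttf$ must itself have $t$-covering number $t+1$. If all members contain $U\bs\{u\}$, that $t$-set covers $\ttf$, contradicting $\tttf=t+1$. So at least two of the $\ms_u$ are non-empty, and the cross-intersecting bound applies directly.
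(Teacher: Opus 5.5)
Your argument, once you take the last paragraph as the actual proof, is correct and is essentially the paper's. You show every member of $\ttf$ has the form $(U\bs\{u\})\cup P$ with $u\in U\bs F$ and $P\in{F\bs U\c 2}$; the paper indexes by the retained pair $(U\bs F)\bs\{u\}$ instead, which is equivalent since $|U\bs F|=3$. You then use Lemma \ref{CST} and $\tttf=t+1$ to get at least two non-empty pairwise cross intersecting families, and Theorem \ref{2601192} gives the bound $3(k-t+1)$.

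You should delete the whole $G_u$ detour, whose derivation from $\tf>t+1$ is not valid as written. The worry about the boundary case $t=2$ is also unnecessary, because $3(k-t+1)\le(t+2)(k-t)$ already holds for $t\ge2$ and $k\ge t+3$.
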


\begin{proof}
	Pick $T\in \mt_{t}(\mf)$. By $\left| T\cap U\right|\ge t$, $\left| T\cap F\right|\ge t$ and $(T\cap U)\cup(T\cap F)\subseteq T$, we have  $T\cap U\cap F=U\cap F$, $\left| U\cap F\right|=t-2$, $|T\cap U|=|T\cap F|=t$ and 
	$$T\cap U \in \left\{\left(U\cap F\right) \cup S: S\in \binom{ U\bs F}{2}\right\},\quad T\bs U\in \binom{F\bs U}{2}. $$
	For each $X\in \binom{ U\bs F}{2}$, write 
	$$\mi(X)=\left\{T\in \mt_{t}(\mf): T\cap U=\left(U\cap F\right)\cup X \right\},\  \mj(X)=\left\{T\bs U: T\in \mi(X) \right\}. $$
	Set $\binom{ U\bs F}{2}=\left\{A, B,C\right\}$. 
	Since $\mt_{t}(\mf)$ is $t$-intersecting, 
	we know $\mj(A), \mj(B), \mj(C)\subseteq \binom{F\bs U}{2}$ are pairwise cross intersecting. 
	It follows from $\mt_{t}(\mf)= \mi(A)\cup \mi(B)\cup \mi(C)$ and $\tttf=t+1$ that at least two of $\mj(A), \mj(B)$ and $\mj(C)$ are non-empty. 
	By Theorem \ref{2601192}, we obtain
	$$\left|\mt_{t}(\mf)\right|\le\left|\mj(A)\right|+ \left|\mj(B)\right|+\left|\mj(C)\right|\le 3(k-t+1)<(t+2)(k-t)+1,$$
	as desired.
\end{proof}

The following auxiliary lemma follows immediately from the definition of cross intersecting families.

\begin{lem}\label{2607151}
	Suppose $n\ge4$ and $r\ge 2$. Let  $a,b,c,d$ be distinct elements of $[n]$. Assume that $\ma_{1},\ma_{2}, \ldots, \ma_{r}\subseteq\binom{[n]}{2}$ are pairwise cross intersecting. 
	\begin{itemize}
		\item[\rm(\ro1)] If $\{a,b\}, \{c,d\}\in\ma_{1}$, then $\ma_{i}\subseteq \{\{a,c\},\{a,d\},\{b,c\},\{b,d\}\}$ for any $i\ge 2$. 
		\item[\rm(\ro2)] If $\{a,b\}, \{a,c\}, \{b,c\}\in \ma_{1}$, then $\ma_{i}\subseteq \{\{a,b\}, \{a,c\}, \{b,c\}\}$ for any $i\ge 2$.
		\item[\rm(\ro3)] If $\{a,c\},\{a,d\}\in\ma_{1}$, then $\ma_{i}\subseteq\{A\in\binom{[n]}{2}: a\in A\}\cup\{\{c,d\}\}$ for any $i\ge 2$.
	\end{itemize}
\end{lem}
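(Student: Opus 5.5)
Each item of Lemma \ref{2607151} is a direct check from the definition of cross intersecting. Every member of $\ma_i$ with $i\ge2$ must meet every member of $\ma_1$, so I would take an arbitrary $X\in\ma_i$, $i\ge2$, and list the possibilities for the $2$-set $X$ that meet each of the listed members of $\ma_1$. I would not use Theorem \ref{2601192}; only pairwise cross intersection between $\ma_1$ and $\ma_i$ is needed.

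For (\ro1), $X$ must meet both $\{a,b\}$ and $\{c,d\}$. These sets are disjoint and $|X|=2$, so $X$ consists of exactly one element of $\{a,b\}$ and one element of $\{c,d\}$. This gives the four listed pairs.

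For (\ro2), $X$ must meet $\{a,b\}$, $\{a,c\}$ and $\{b,c\}$. If $X$ contained at most one element of $\{a,b,c\}$, say only $a$ (or none), then $X$ would miss $\{b,c\}$. The analogous statement holds for $b$ and for $c$. Hence $|X\cap\{a,b,c\}|\ge2$, and since $|X|=2$ this forces $X\subseteq\{a,b,c\}$.

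For (\ro3), $X$ must meet $\{a,c\}$ and $\{a,d\}$. If $a\in X$ we are done. Otherwise $X$ must contain $c$ (to meet $\{a,c\}$) and $d$ (to meet $\{a,d\}$), so $X=\{c,d\}$.

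The hypothesis $n\ge4$ only ensures that the distinct elements $a,b,c,d$ exist. There is no real obstacle here; the only care needed is the small pigeonhole step in (\ro2).
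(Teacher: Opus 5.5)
Your proof is correct and follows the same approach as the paper. The paper gives no written proof and only remarks that the lemma follows immediately from the definition of cross intersecting families; your three case checks, using only that $\ma_1$ and $\ma_i$ are cross intersecting, are exactly that verification.
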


\begin{lem}\label{2601222}
	Suppose $n>2k$, $k\ge t+3$  and $\mf$ is a maximal $t$-intersecting subfamily of ${[n]\c k}$ with $\tf=t+2$ and $\tttf=t+1$. If $\mt_{t}(\mf)\nsubseteq \binom{U_{0}\cup F_{0}}{t+2}$ for some $U_{0}\in \mmu_{t}(\mf)$ and $F_{0}\in \mf$ with $\left|U_{0}\cap F_{0}\right|=t-1$, then 
	$$ \left|\mt_{t}(\mf) \right|<\max \left\{(k-t)(k-t+1)+1,(t+2)(k-t)+1,\binom{t+4}{2} \right\} .$$
\end{lem}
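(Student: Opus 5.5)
Write $S=U_0\cap F_0$. Since $U_0\in\mmu_t(\mf)$ and $\tttf=t+1$, we have $|U_0|=t+1$ and $|S|=t-1$. So $U_0\bs F_0=\{a,b\}$ consists of two elements, and $|F_0\bs U_0|=k-t+1$. Every $T\in\ttf$ has $|T|=t+2$, $|T\cap U_0|\ge t$ (because $U_0$ is a $t$-cover of $\ttf$) and $|T\cap F_0|\ge t$.

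The first step is to pin down a member $T^*\in\ttf$ with $T^*\not\subseteq U_0\cup F_0$. Pick $z\in T^*\bs(U_0\cup F_0)$. Then $T^*\bs\{z\}$ is a $(t+1)$-subset of $U_0\cup F_0$ meeting both $U_0$ and $F_0$ in at least $t$ elements. Adding the two inequalities gives $2|T^*\cap S|\ge 2t-2$, so $S\subseteq T^*$. It follows that
$$T^*=S\cup\{a,y,z\}$$
for some $y\in F_0\bs U_0$, after relabelling $a,b$ if necessary.

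Next I split $\ttf$ according to $T\cap U_0$.

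\emph{Type 0: $U_0\subseteq T$.} Then $T=U_0\cup\{v\}$. Since $|T\cap F_0|\ge t$, we need $v\in F_0\bs U_0$, so there are at most $k-t+1$ such $T$.

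\emph{Type $s$: $T\cap U_0=U_0\bs\{s\}$ with $s\in S$.} Then $|T\cap F_0|\ge t$ forces $T\bs U_0\in{F_0\bs U_0\c 2}$. In addition $|T\cap T^*|\ge t$ forces $y\in T$. Hence there are at most $k-t$ such $T$ for each $s$, giving at most $(t-1)(k-t)$ in total.

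\emph{Types $a$ and $b$: $T\cap U_0=U_0\bs\{a\}$ or $U_0\bs\{b\}$.} Here $T\bs U_0$ is a $2$-set meeting $F_0\bs U_0$. Let $\mj_a$ and $\mj_b$ be the families of these $2$-sets. Since $|(U_0\bs\{a\})\cap(U_0\bs\{b\})|=t-1$, the families $\mj_a$ and $\mj_b$ are cross intersecting. For type $a$, we also need $|T\cap T^*|\ge t$, which forces $T\bs U_0$ to meet $\{y,z\}$. These families are the crux: a priori they live on the whole ground set, so Theorem \ref{2601192} alone is too weak. I plan to combine three constraints: cross intersection between $\mj_a$ and $\mj_b$; the requirement that each set meets $F_0\bs U_0$; and intersection with the type-$s$ sets through $y$ and with $T^*$. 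With these, Lemma \ref{2607151} should do the case analysis.

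\begin{itemize}
\item If $\mj_a$ contains two disjoint pairs, or a triangle, then Lemma \ref{2607151}(\ro1),(\ro2) confines $\mj_b$ to at most $4$ or $3$ sets. In that case $\mj_a$ has at most about $2(k-t+1)$ members, because each of its pairs meets $\{y,z\}$.
\item Otherwise $\mj_a$ is a star, possibly together with one extra pair, as in Lemma \ref{2607151}(\ro3). Then $\mj_b$ is forced into the same star, and the star's centre must lie in $F_0\bs U_0$ or be $z$. Either way $|\mj_a|+|\mj_b|\le 2(k-t+1)+O(1)$.
\end{itemize}

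I also need to handle the case where $\mj_b$ or the type-$s$ families are empty. There, Lemma \ref{2601161}, the hypothesis $\tttf=t+1$ (no $t$-set covers all of $\ttf$), and the existence of $T^*$ should rule out degenerate configurations. Summing the types gives roughly
$$|\ttf|\le (k-t+1)+(t-1)(k-t)+2(k-t+1)+O(1)=(t+2)(k-t)+O(1).$$
I expect the main obstacle to be squeezing this below the maximum of the three thresholds rather than merely matching it. For that I would use $T^*$ once more: $T^*\bs U_0=\{y,z\}$ with $z\notin F_0$. This shows that Type 0 and the star case cannot simultaneously be full. If some member of Type 0 or of the star must be lost, the final count is at most $(t+2)(k-t)$, which is strictly less than $(t+2)(k-t)+1$. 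When $t$ is small, the remaining bounded configurations should fall under ${t+4\c2}$ or $(k-t)(k-t+1)+1$.
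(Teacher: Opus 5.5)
Your splitting of $\ttf$ by $T\cap U_0$, the member $T^*=S\cup\{a,y,z\}$, and the bounds $k-t+1$ for Type~0 and $k-t$ for each Type~$s$ are exactly the paper's set-up (its $\mathcal{V}_0,\mathcal{V}_i$ and $T_0$). There is a genuine gap, though: everything after that is where the proof actually lives, and as sketched it does not go through. Two ingredients are missing or wrong.

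\emph{Bounding stars.} Nothing you state bounds a star. In a star centred at $y$, the partner $w$ of a pair $\{y,w\}\in\mj_a\cup\mj_b$ is not restricted by any constraint you list, since $y\in F_0$ already gives $|T\cap F_0|\ge t$. So ``each pair meets $\{y,z\}$'' does not give $|\mj_a|\le 2(k-t+1)$. What is needed is that every $(t+1)$-set $E$ lies in at most $k-t+1$ members of $\ttf$. Indeed, as $\tf=t+2$, some $F\in\mf$ has $|E\cap F|\le t-1$, and then each such member is $E\cup\{f\}$ with $f\in F\setminus E$.

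\emph{The case split.} Splitting on $\mj_a$ alone is not exhaustive. If $|\mj_a|\le 1$, nothing forces $\mj_b$ into a star, and $\mj_b$ may contain two disjoint pairs or a triangle. Also, Lemma~\ref{2607151}(iii) places $\mj_b$ in the star plus one extra pair, not in the star itself. The split must be made on $\mj_a\cup\mj_b$, which contains $\{y,z\}$, so a common centre is $y$ or $z$.

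\emph{The final count.} This is the decisive problem. Your tally $(k-t+1)+(t-1)(k-t)+2(k-t+1)$ equals $(t+2)(k-t)+3$, so removing one member cannot give $(t+2)(k-t)$.

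In the non-star cases and the $z$-star case, the saving must come from the Type~$s$ families, which you leave at $(t-1)(k-t)$. Their pairs contain $y$, lie in $F_0\setminus U_0$, and must meet every pair of $\mj_a\cup\mj_b$. So once $\mj_a\cup\mj_b$ is not a star at $y$, each Type~$s$ family has at most two members. With further sub-case bookkeeping the totals then drop to at most $\max\{(t+1)(k-t)+4,\,3(k-t)+t+2\}$, which is below the maximum in the statement.

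In the $y$-star case, even after noting that $U_0\cup\{y\}$ contains both $S\cup\{a,y\}$ and $S\cup\{b,y\}$, you only reach $(t+2)(k-t)+2$. Two further members must be removed: for $t=3$, $k=7$ all three thresholds equal $21$, while this bound is $22$. Your remark that Type~0 and the star cannot both be full is neither justified nor enough.

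The paper handles this case for $t\ge 3$ as follows:
\begin{enumerate}
\item By Lemma~\ref{2601161}, assume every minimum $t$-cover of $\ttf$ meets every member of $\mf$ in at least $t-1$ points.
\item In this case every $(t+1)$-subset of $U_0\cup\{y\}$ is such a cover.
\item A member $F_1\in\mf$ with $|F_1\cap(S\cup\{a,y\})|=t-1$ then has $|F_1\cap(U_0\cup\{y\})|=t$. This caps Type~$b$ at $k-t$, and Type~$a$ similarly.
\item Since $t\ge 3$, some third $(t+1)$-subset $Y$ of $U_0\cup\{y\}$ meets $F_1$ in only $t-1$ points. Members of $\ttf$ through $Y$ must then take their last point in $F_1\cap F_0$.
\item Because $T^*$ is of Type~$b$, we have $z\in F_1\setminus F_0$. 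This costs one member of Type~0 or of some Type~$s$.
\end{enumerate}
For $t\le 2$, the bound $(t+2)(k-t)+2$ is already below $\max\{(k-t)(k-t+1)+1,\binom{t+4}{2}\}$.
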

\begin{proof}
	By Lemma \ref{2601161}, it is sufficient to consider the case that $|U\cap F|\ge t-1$ for any $U\in\mmu_t(\mf)$ and $F\in\mf$. 
	For $X\subseteq[n]$ with $X\cap U_{0}=\emptyset$, set
	$$\mv_{0}(X)=\left\{ U_{0}\cup \left\{x\right\}: x\in X\right\}\cap \mt_{t}(\mf).$$	
	Write $\binom{U_{0}}{t}=\left\{V_{1},V_{2},\ldots, V_{t+1}\right\}$ and
	$$\mv_{i}=\left\{T\in \mt_{t}(\mf): T\cap U_{0}=V_{i} \right\}, \ \mv_{i}^{\prime}=\left\{T\bs U_{0}: T\in \mv_{i} \right\}, \ i\in [t+1].$$ 
	Observe that $|V_i\cap V_j|=t-1$ for distinct $i,j\in[t+1]$. 
	It follows from Lemma \ref{CST} that $\mv_{1},\mv_{2}, \ldots, \mv_{t+1}$ are pairwise cross $t$-intersecting,  and $\mv_{1}^{\prime}, \mv_{2}^{\prime}, \ldots , \mv_{t+1}^{\prime}$ are pairwise cross intersecting. 
	W.l.o.g., assume that $V_1\cap V_2=U_0\cap F_0$.

	For each $T\in \mt_{t}(\mf)$ with $U_{0}\subseteq T$, by $\left|T\cap F_{0}\right|\ge t$, we derive $T\subseteq U_{0}\cup F_{0}$ and $$\left\{T\in \mt_{t}(\mf):U_{0}\subseteq T\right\}=\mv_{0}(F_{0}\bs U_{0}).$$ 
	Since $U_{0}$ is a $t$-cover of $\mt_{t}(\mf)$, we have 
	\begin{equation}\label{2601223}
		\mt_{t}(\mf)= \mv_{0}(F_{0}\bs U_{0})\cup \mv_{1} \cup \mv_{2}\cup \cdots \cup \mv_{t+1}. 
	\end{equation}
	Observe that each member of $\ttf\bs(\mv_1\cup\mv_2)$ is a subset of $U_0\cup F_0$. 
	By $\ttf\not\subseteq{U_0\cup F_0\c t+2}$, there exists $T_0\in\mv_1\cup\mv_2$ such that
	$T_0\not\subseteq U_0\cup F_0$.
	W.l.o.g., assume $T_{0}\in \mv_{1}$, and $T_{0}\bs U_{0}=\left\{a, b\right\}$ where $a\in F_{0}\bs U_{0}$ and $b\notin U_{0}\cup F_{0}$. For each $i\not\in\{1,2\}$ with $\mv_i\neq\emptyset$, since $\mv'_1$ and $\mv'_i$  are cross intersecting, we obtain
	\begin{equation}\label{2601224}
		\mv_{i}^{\prime}\subseteq \left\{V^{\prime}\in \binom{F_{0}\bs U_{0}}{2}:a\in V^{\prime}\right\}.
	\end{equation}

	\begin{cl}\label{2601225}
		For $E\in\binom{[n]}{t+1}$, we have $\left| \left\{ T\in \mt_{t}(\mf): E\subseteq T\right\}\right|\le k-t+1$.
	\end{cl}
	\begin{proof}
		Since $\tau_{t}(\mf)=t+2$, there exists $F\in\mf$ such that $\left|E\cap F\right|<t$. If $\left|E\cap F\right|\le t-2$, then $\left\{ T\in \mt_{t}(\mf): E\subseteq T\right\}=\emptyset$ and the desired result follows. If $\left|E\cap F\right|= t-1$, then 
		$$|\left\{ T\in \mt_{t}(\mf): E\subseteq T\right\}|\le|\left\{ E\cup \left\{f\right\}: f\in F\bs E\right\}|=k-t+1,$$
		as desired.
	\end{proof}

	It is routine to check that
	 \begin{equation*}\label{2601226}
		(t+1)(k-t)+4<\left\{
		\begin{aligned}
			&(k-t)(k-t+1)+1, &&\mbox{if}\ t=1\ \mbox{and}\ k=t+3,\\
			&\binom{t+4}{2}, &&\mbox{if}\ t\ge 2\ \mbox{and}\ k= t+3,\\
			&(t+2)(k-t)+1, &&\mbox{if}\ k\ge  t+4,
		\end{aligned}
		\right.
	\end{equation*}
	and
	\begin{equation*}\label{2601227}
		3(k-t)+t+2<\left\{
		\begin{aligned}
			&(k-t)(k-t+1)+1, &&\mbox{if}\ t=1,\\
			&\binom{t+4}{2}, &&\mbox{if}\ t\ge 2\ \mbox{and}\ k= t+3,\\
			&(t+2)(k-t)+1, &&\mbox{if}\ t\ge 2\ \mbox{and}\ k\ge  t+4.
		\end{aligned}
		\right.
	\end{equation*}
	To get the desired result, sometimes it is sufficient to show
	$$|\ttf|\le\max\{(t+1)(k-t)+4,3(k-t)+t+2\}.$$
	We divide our following proof into three cases, and remark that, for $i\in[t+1]$ and $x\in F_0\bs U_0$,
	\begin{equation}\label{v0}
		\begin{aligned}
			&\ \mv_0(F_0\bs U_0)\cup\{T\in\ttf: V_i\cup\{x\}\subseteq T\}\\
			=&\ \mv_0(F_0\bs(U_0\cup\{x\}))\cup(\{U_0\cup\{x\}\}\cap\ttf)\cup\{T\in\ttf: V_i\cup\{x\}\subseteq T\}\\
			=&\ \mv_0(F_0\bs(U_0\cup\{x\}))\cup\{T\in\ttf: V_i\cup\{x\}\subseteq T\}.
		\end{aligned}
	\end{equation}

	\medskip
	\noindent
	\textbf{Case 1.} $\mv_{1}^{\prime}\cup \mv_{2}^{\prime}$ is not intersecting.    
	\medskip

	Pick $A$, $B\in \mv_{1}^{\prime}\cup \mv_{2}^{\prime}$ with $A\cap B=\emptyset$. Then $A, B\in \mv_{j_0}^{\prime}$ for some $j_0\in \{1,2\}$. By Lemma \ref{2607151} (\ro1), we get $|\mv_{3-j_0}^{\prime}|\le 4$. For $i\not\in\{1,2\}$, since $a\not\in A$ or $a\not\in B$, $|\mv_i'|\le2$ follows from \eqref{2601224}.

	\medskip
	\noindent
	\textbf{Case 1.1.} $|\mv_{i_{0}}^{\prime}|=2$ for some $i_{0}\notin\{1,2\}$.    
	\medskip
	
	Set $\mv_{i_{0}}^{\prime}=\{\{a,c\}, \{a,d\}\}$, where $c,d\in F_{0}\bs U_{0}$. By Lemma \ref{2607151} (\ro3), we know 
	$$\mv_{j_0}\subseteq \{T\in\mt_{t}(\mf):V_{j_0}\cup \{a\}\subseteq T\}\cup\{ V_{j_0}\cup\{c,d\}\}.$$
	We further conclude $\{A,B\}=\{\{c,d\}, \{a,e\}\}$ for some $e\in [n]$. This together with $\mv_{i_{0}}^{\prime}=\{\{a,c\}, \{a,d\}\}$ and Lemma \ref{2607151} (\ro3)  implies $\mv_{3-j_0}^{\prime}\subseteq \{\{a,c\},\{a,d\}\}$. 
	Consequently, by (\ref{2601223}), (\ref{v0}) and Claim \ref{2601225}, we get
	\begin{equation*}
		|\mt_{t}(\mf)|\le |\mv_{0}(F_{0}\bs U_{0})\cup \mv_{j_0}|+\sum_{i\neq j_0}|\mv_{i}|\le 2(k-t+1)+2t\le (t+1)(k-t)+4,
	\end{equation*}
	as desired.

	\medskip
	\noindent
	\textbf{Case 1.2.} $|\mv_{3-j_0}^{\prime}|\ge 2$ and  $|\mv_{i}^{\prime}|\le 1$ for any $i\notin\{1,2\}$.    
	\medskip
	
	Suppose that $\mv_{3-j_0}^{\prime}$ is not intersecting. By Lemma \ref{2607151} (\ro1), we know $|\mv_{j_0}^{\prime}|\le 4$. From (\ref{2601223}), we obtain 
	$|\mt_{t}(\mf)|\le (k-t+1)+8+t-1=k+8\le 3(k-t)+t+2$.

	Suppose that $\mv_{3-j_0}^{\prime}$ is  intersecting. By Lemma \ref{2607151} (\ro1), we know $\mv_{3-j_0}^{\prime}=\{\{u,c\}:c\in C\}$ for some $u\in A\cup B$ and $C\in\{A, B\}$ with $u\notin C$. It follows from Lemma \ref{2607151} (\ro3) that 
	$$\mv_{j_0}\subseteq \{T\in\mt_{t}(\mf):V_{j_0}\cup \{u\}\subseteq T\}\cup\{ V_{j_0}\cup C\}.$$
	This together with Claim \ref{2601225} yields  $|\mv_{j_0}|\le k-t+2$. From (\ref{2601223}), we get $$|\mt_{t}(\mf)|\le |\mv_{0}(F_{0}\bs U_{0})|+\sum_{i=1}^{t+1}|\mv_{i}|\le 2(k-t)+t+4\le 3(k-t)+t+2,$$
	as required.

	\medskip
	\noindent
	\textbf{Case 1.3.}  $|\mv_{i}^{\prime}|\le 1$ for any $i\neq j_0$.    
	\medskip
	
	There exists $T\in\bigcup_{i\neq j_0}\mv_{i}$ by $\tttf=t+1$. 
	It follows from $|T\cap F_0|\ge t$ that $(T\bs U_0)\cap F_0\neq\emptyset$. 
	Set $T\bs U_{0}=\left\{x,y \right\}$, where $x\in F_0\bs U_0$.  We have
	$$\mv_{j_0}\subseteq \left\{T\in \mt_{t}(\mf): V_{j_0}\cup \left\{x\right\}\subseteq T\right\}\cup  \left\{T\in \mt_{t}(\mf): V_{j_0}\cup \left\{y\right\}\subseteq T\right\}.$$
	This together with \eqref{2601223}, \eqref{v0} and Claim \ref{2601225} yields 
	\begin{equation*}
		\begin{aligned}
			|\ttf|\le|\mv_0(F_0\bs U_0)\cup\mv_{j_0}|+\sum_{i\neq j_0}|\mv_i|\le 3(k-t)+t+2,
		\end{aligned}
	\end{equation*}
	as desired.

	\medskip
	\noindent
	\textbf{Case 2.} $\mv_{1}^{\prime}\cup \mv_{2}^{\prime}$ is non-trivially intersecting.    
	\medskip

	In this case, $\mv_{1}^{\prime}\cup \mv_{2}^{\prime}=\{\{a,b\},\{a,c\},\{b,c\}\}$ for some $c\in[n]$. Recall that $b\not\in F_0$. Either $|(V_1\cup\{b,c\})\cap F_0|$ or $|(V_2\cup\{b,c\})\cap F_0|$ is at least $t$, implying that $c\in F_0\bs U_0$.
	By Lemma \ref{2607151} (\ro2) and (\ref{2601224}), we know $|\mv_{i}|\le1$ for any $i\notin\{1,2\}$. 
	Then $|\mt_{t}(\mf)|\le (k-t+1)+6+t-1\le 3(k-t)+t+2$ follows from \eqref{2601223}.

	\medskip
	\noindent
	\textbf{Case 3.} $\mv_{1}^{\prime}\cup \mv_{2}^{\prime}$ is trivially intersecting.    
	\medskip

	\noindent
	\textbf{Case 3.1.} $\bigcap_{V^{\prime}\in \mv_{1}^{\prime}\cup \mv_{2}^{\prime}}V^{\prime}=\{b\}$.    
	\medskip

	In this case, we have  $\mv_{i}\subseteq \{T\in\mt_{t}(\mf):V_{i}\cup \{b\}\subseteq T\}$ for each $i\in\{1,2\}$.
	Furthermore, there exists $\{b,c\}\in(\mv_{1}^{\prime}\cup \mv_{2}^{\prime})\bs\{\{a,b\}\}$ for some $c\in F_0\bs U_0$. 
	From \eqref{2601224}, for each $j\not\in\{1,2\}$, it follows that  $\mv_{j}^{\prime}\subseteq \{\{a,c\}\}$.  
	This together with \eqref{2601223}  and Claim \ref{2601225}  yields $|\mt_{t}(\mf)|\le 3(k-t+1)+t-1=3(k-t)+t+2$.
	
	\medskip
	\noindent
	\textbf{Case 3.2.} $a\in \bigcap_{V^{\prime}\in \mv_{1}^{\prime}\cup \mv_{2}^{\prime}}V^{\prime}$.    
	\medskip

	In this case, we have $\mv_{i}\subseteq \{T\in\mt_{t}(\mf):V_{i}\cup \{a\}\subseteq T\}$ for each $i\in\{1,2\}$.
	Then it follows from \eqref{2601223}--\eqref{v0} and Claim \ref{2601225} that
	$$|\ttf|\le|\mv_0(F_0\bs U_0)\cup\mv_1|+\sum_{i=2}^{t+1}|\mv_i|\le(t+2)(k-t)+2.$$
	If $t\le2$, then $|\ttf|<\max\{(k-t)(k-t+1)+1,{t+4\c2}\}$, as desired. Next assume  $t\ge3$.

	By \eqref{2601224} and the assumption,  each member of $\bigcup_{i=1}^{t+1}\mv_{i}$ contains $a$. Then  $\binom{U_{0}\cup \left\{a\right\}}{t+1}\subseteq \mmu_{t}(\mf)$. 
	We also obtain 
	$$\mv_{i}=\left\{T\in \mt_{t}(\mf): T\cap \left(U_{0}\cup \left\{a\right\}\right)=V_{i}\cup \left\{a\right\}\right\},\ i\in[t+1]. $$
	Since $\tau_{t}(\mf)=t+2$, there exists $F_{1}\in \mf$ such that $\left|F_{1}\cap \left(V_{1}\cup \left\{a\right\}\right)\right|<t$. Recall that  $\left|U\cap F\right|\ge t-1$ for any $U\in \mmu_{t}(\mf)$ and $F\in \mf$. 
	This together with $\binom{U_{0}\cup \left\{a\right\}}{t+1}\subseteq \mmu_{t}(\mf)$ yields
	$\left|F_{1}\cap \left(V_{1}\cup \left\{a\right\}\right)\right|=t-1$ and $\left|F_{1}\cap \left(U_{0}\cup \left\{a\right\}\right) \right|=t$. 
	Hence $$\mv_{1}\subseteq \left\{V_{1}\cup \left\{a, f\right\}: f\in  F_{1}\bs \left( U_{0}\cup \left\{a\right\}\right)\right\},\quad\left|\mv_{1}\right|\le k-t.$$ Similarly, we have $\left|\mv_{2}\right|\le k-t$. We further conclude  from \eqref{2601224} that
	\begin{equation}\label{2601241}
		\left|\mv_{i}\right|\le k-t,\ i\in[t+1].
	\end{equation}

	Since $V_{1}\cup\left\{a,b\right\}\in \mv_{1}$, we have $b\in F_{1}\bs ( U_{0}\cup \left\{a\right\})$, implying that 
	$$ \left|\left( F_{1}\bs \left( U_{0}\cup \left\{a\right\}\right)\right)\cap \left( F_{0}\bs \left( U_{0}\cup \left\{a\right\}\right)\right)\right|< k-t.$$
	Recall that $t\ge 3$ and $\left|F_{1}\cap \left(U_{0}\cup \left\{a\right\}\right) \right|=t$. There exist at least $3$ members of $\{U_{0}, V_{1}\cup\left\{ a\right\}, \ldots , V_{t+1}\cup\left\{ a\right\}\}$ whose intersections with $F_{1}\cap \left(U_{0}\cup \left\{a\right\}\right)$ have sizes  less than $t$. 
	Suppose that  $Y$ is such a set with $Y\not\in\{ V_{1}\cup \left\{a\right\}, V_{2}\cup \left\{a\right\}\}$.  
	If $Y=U_{0}$, then 
	$$\mv_{0}(F_{0}\bs U_{0})\subseteq \left\{ U_{0}\cup \left\{a\right\}\right\}\cup \left\{U_{0}\cup \left\{f\right\}: f\in \left( F_{1}\bs \left( U_{0}\cup \left\{a\right\}\right)\right)\cap \left( F_{0}\bs \left( U_{0}\cup \left\{a\right\}\right)\right)\right\},$$
	which implies $\left|\mv_{0}(F_{0}\bs U_{0})\right|<k-t+1$. 
	If $Y=V_{i_{0}}\cup \left\{a\right\}$ for some $i_0\notin\left\{1,2\right\}$, then by \eqref{2601224}, we have 
	$$\mv_{i_{0}}\subseteq \left\{V_{i_0}\cup \left\{a,f\right\}: f\in \left( F_{1}\bs \left( U_{0}\cup \left\{a\right\}\right)\right)\cap \left( F_{0}\bs \left( U_{0}\cup \left\{a\right\}\right)\right)\right\},$$
	which implies $\left|\mv_{i_{0}}\right|<k-t$. These together with  \eqref{2601223} and \eqref{2601241} produce $$\left|\mt_{t}(\mf)\right|\le|\mv_0(F_0\bs U_0)|+\sum_{i=1}^{t+1}|\mv_i|\le(k-t+1)+(t+1)(k-t)-1=(t+2)(k-t),$$ as desired.
\end{proof}

\begin{lem}\label{2601221}
	Suppose $n>2k$, $k\ge t+3$ and $\mf$ is a maximal $t$-intersecting subfamily of ${[n]\c k}$ with $\tf=t+2$ and $\tttf=t+1$. If  $\mt_{t}(\mf)\subseteq \binom{U_{0}\cup F_{0}}{t+2}$ for some $U_{0}\in \mmu_{t}(\mf)$ and $F_{0}\in \mf$ with $\left|U_{0}\cap F_{0}\right|=t-1$, then  at least one of the following holds.
		\begin{itemize}
		\item[\rm{(\ro1)}] $|\ttf|<\max\left\{(k-t)(k-t+1)+1,(t+2)(k-t)+1,{t+4\c2}\right\}$.
		\item[\rm{(\ro2)}] There exist $M\in{[n]\c k+2}$ and $W\in{M\c t+2}$ such that
		$$\ttf=\left\{T\in{M\c t+2}: |T\cap W|\ge t+1\right\}.$$
	\end{itemize}
\end{lem}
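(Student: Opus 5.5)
We keep the notation of the proof of Lemma \ref{2601222}. Set $M=U_0\cup F_0$, so $|M|=(t+1)+(k-t+1)=k+2$. Write $\binom{U_0}{t}=\{V_1,\dots,V_{t+1}\}$ with $V_1\cap V_2=U_0\cap F_0$, and define $\mv_i,\mv_i'$ as before; now every $\mv_i'$ is a family of $2$-subsets of $F_0\bs U_0$. By Lemma \ref{2601161} we may assume $|U\cap F|\ge t-1$ for all $U\in\mmu_t(\mf)$ and $F\in\mf$. As in \eqref{2601223},
$$\ttf=\mv_0(F_0\bs U_0)\cup\mv_1\cup\cdots\cup\mv_{t+1},$$
and the families $\mv_1',\dots,\mv_{t+1}'$ are pairwise cross intersecting subfamilies of $\binom{F_0\bs U_0}{2}$, where $|F_0\bs U_0|=k-t+1$.

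\textbf{Step 1 (case analysis on the $\mv_i'$).} We classify according to the structure of $\bigcup_{i}\mv_i'$, reusing Lemma \ref{2607151} and Claim \ref{2601225} exactly as in Cases 1--3 of Lemma \ref{2601222}.
\begin{itemize}
\item If some $\mv_{j}'$ is not intersecting, or the union is a triangle, Lemma \ref{2607151} (\ro1)--(\ro2) forces every other $\mv_i'$ to have size at most $4$ or $3$. Claim \ref{2601225} then bounds $|\mv_0(F_0\bs U_0)\cup\mv_j|$ by about $2(k-t+1)$, and we obtain $|\ttf|\le\max\{(t+1)(k-t)+4,\,3(k-t)+t+2\}$, which gives (\ro1).
\item If all nonempty $\mv_i'$ share a common element $a$, then every member of $\ttf\bs\mv_0$ contains $a$. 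Set $W=U_0\cup\{a\}$, a $(t+2)$-set inside $M$. Each $T\in\ttf$ then satisfies $|T\cap W|\ge t+1$, since $U_0\subseteq T$ for $T\in\mv_0$ and $V_i\cup\{a\}\subseteq T$ for $T\in\mv_i$. Hence $\ttf\subseteq\{T\in\binom{M}{t+2}:|T\cap W|\ge t+1\}$, a family of size exactly $(t+2)(k-t)+1$.
\item The remaining case is that the union is intersecting but distinct $\mv_i'$ have different centers. Cross intersection then forces very small families, at most $1$ or $2$ members each, apart from a single star, which again yields (\ro1).
\end{itemize}

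\textbf{Step 2 (the star case).} Here we must show that either the containment in Step 1 is an equality, which is (\ro2), or $|\ttf|<(t+2)(k-t)+1$, which is (\ro1). The only thing to prove is that a proper subfamily cannot have size exactly $(t+2)(k-t)+1$, and that already follows from the containment. So we just need strictness, and the inequality is automatic whenever some member is missing. This makes the dichotomy immediate once the containment holds.

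\textbf{Main obstacle.} The delicate point is that the star center $a$ is common to all $\mv_i'$ simultaneously, including $i\ge3$ and both $\mv_1',\mv_2'$, rather than to a subcollection. Different centers are possible only when some $\mv_i'$ are singletons, such as $\{a,c\}$ together with a star at $c$. I would handle this by using $\tttf=t+1$, which requires at least two $\mv_i$ to be nonempty, together with Lemma \ref{2607151} (\ro3). Together these show that in any mixed-center configuration the total count is at most $3(k-t)+t+2$, by an estimate like Case 3.1 of Lemma \ref{2601222}. This bookkeeping across the $t+1$ families, with the equality threshold near $(t+2)(k-t)+1$, is where care is needed. For small $t$ the comparison should use $(k-t)(k-t+1)+1$ or $\binom{t+4}{2}$ instead, as in the routine inequalities displayed in Lemma \ref{2601222}.
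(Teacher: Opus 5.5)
Your star case is correct and neater than the paper's. Once every member of $\bigcup_i\mv_i'$ contains a common $a$, $\ttf$ lies in $\{T\in\binom{M}{t+2}:|T\cap W|\ge t+1\}$ with $W=U_0\cup\{a\}$. That family has size $(t+2)(k-t)+1$, so (i) or (ii) follows at once, whereas the paper needs the uniqueness clause of Theorem~\ref{2601192} for the equality case. Your triangle case is also fine.

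Your third case and your ``main obstacle'' are vacuous. If every $\mv_i'$ is intersecting, then, since distinct $\mv_i'$ are cross intersecting, $\bigcup_i\mv_i'$ is an intersecting family of $2$-sets, hence a star or a triangle. Your example, $\{a,c\}$ together with a star at $c$, has the common element $c$ and is covered by the star case. The bound $3(k-t)+t+2$ you propose for it is false in general: take all $\mv_i'$, $i\ne1$, equal to the full star at $c$, with $t\ge4$.

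The genuine gap is the case where some $\mv_j'$ is not intersecting, say $\{a,b\},\{c,d\}\in\mv_j'$; this is where the content of the lemma lies. You reuse Cases 1--3 of Lemma~\ref{2601222}, but those rest on \eqref{2601224}, which was derived from a member $T_0$ containing a point $b\notin U_0\cup F_0$. That is exactly what the present hypothesis $\ttf\subseteq\binom{U_0\cup F_0}{t+2}$ excludes.

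The two estimates you state also do not give (i). At most $4$ members per other family plus $|\mv_0(F_0\bs U_0)\cup\mv_j|\approx 2(k-t+1)$ yields only $2(k-t+1)+4t$, which for $t=2$, $k=5$ is $16>15=\max\{13,13,15\}$. The second estimate is itself unjustified. If every other $\mv_i'$ is $\{\{a,c\}\}$ and $\mv_j'$ consists of all pairs of $F_0\bs U_0$ meeting $\{a,c\}$, your constraints allow $|\mv_0(F_0\bs U_0)\cup\mv_j|=3(k-t)$. What is needed is a trade-off between $|\mv_j'|$ and the other families.

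The paper gets this trade-off from Theorem~\ref{2601192}:
\begin{itemize}
\item The bound $\sum_i|\mv_i'|\le\max\{2k-t-1,(t+1)(k-t)\}$ settles $t=1$ and $k=t+3$.
\item For $t\ge2$, $k\ge t+4$, the uniqueness clause gives $\sum_i|\mv_i'|\le(t+1)(k-t)-1$ for every non-star configuration, hence $|\ttf|\le(t+2)(k-t)$.
\end{itemize}

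A direct count also works. The other families lie in $\{\{a,c\},\{a,d\},\{b,c\},\{b,d\}\}$, and two disjoint pairs from this $4$-cycle cannot occur in different families. So each of the two opposite pairs contributes at most $\max\{t,2\}$. Then:
\begin{itemize}
\item If the union of the other families is a single pair, they contribute at most $t$ and $|\mv_j'|\le 2(k-t)-1$.
\item If the union contains two pairs sharing a point, every member of $\mv_j'$ meets both, so $|\mv_j'|\le k-t+1$.
\item If the union contains two disjoint pairs, then $|\mv_j'|\le 4$ and that opposite pair contributes at most $2$.
\end{itemize}
Each branch lands strictly below the maximum. One of these two arguments has to be supplied; as written, your non-star case is unproved.
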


\begin{proof}
	By Lemma \ref{2601161}, it is sufficient to consider the case that $|U\cap F|\ge t-1$ for any $U\in\mmu_t(\mf)$ and $F\in\mf$. 
	Write $\binom{U_{0}}{t}=\left\{V_{1},V_{2},\ldots, V_{t+1}\right\}$ and  
	$$\mv_{i}=\left\{T\in \mt_{t}(\mf): T\cap U_{0}=V_{i} \right\}, \ \mv_{i}^{\prime}=\left\{T\bs U_{0}: T\in \mv_{i} \right\}, \ i\in [t+1].$$
	Since $U_{0}$ is a $t$-cover of $\mt_{t}(\mf)$, we have
	\begin{equation}\label{2601194}
		\mt_{t}(\mf)=\left\{T\in \mt_{t}(\mf): U_{0}\subseteq T\right\}\cup \mv_{1} \cup \mv_{2}\cup \cdots \cup \mv_{t+1}. 
	\end{equation}
	From $T\subseteq U_{0}\cup F_{0}$ for each $T\in \mt_{t}(\mf)$, we obtain
	\begin{equation}\label{2601195}
		\left\{ T\bs U_{0}: T\in \mt_{t}(\mf),\  U_{0}\subseteq T\right\} \subseteq \binom{F_{0}\bs U_{0} }{1},\quad\mv_{1}^{\prime}, \mv_{2}^{\prime}, \ldots, \mv_{t+1}^{\prime}\subseteq \binom{F_{0}\bs U_{0}}{2}.
	\end{equation}
	Then $$\left|\left\{T\in \mt_{t}(\mf): U_{0}\subseteq T\right\}\right|\le k-t+1.$$
	Since $\mt_{t}(\mf)$ is $t$-intersecting and $\tttf=t+1$, we know 	 $\mv_{1}^{\prime}, \mv_{2}^{\prime}, \ldots, \mv_{t+1}^{\prime}$ are pairwise cross intersecting and  at least two of them are non-empty. 
	It follows from (\ref{2601195}) and Theorem \ref{2601192} that
	$$\left|\mv_{1}^{\prime}\right|+\left|\mv_{2}^{\prime}\right|+\cdots +\left|\mv_{t+1}^{\prime}\right|\le \max\{2k-t-1,(t+1)(k-t)\}.$$
	
	If $t=1$, then $2k-t-1=(t+1)(k-t)<(k-t-1)(k-t+1)+1$. If $t\ge 2$ and $k=t+3$, then $2k-t-1<(t+1)(k-t)<{t+4\c2}-(k-t+1)$.
	In a word, if $t=1$, or $t\ge2$ and $k=t+3$, then (\ro1) holds.

	Now suppose  $k\ge t+4\ge 6$. We have
	$$|\ttf|\le|\left\{T\in \mt_{t}(\mf): U_{0}\subseteq T\right\}|+\sum_{i=1}^{t+1}|\mv_i|\le(t+2)(k-t)+1.$$
	Assume that $|\ttf|=(t+2)(k-t)+1$. We have 
	$$\left\{ T\bs U_{0}: T\in \mt_{t}(\mf),\  U_{0}\subseteq T\right\} = \binom{F_{0}\bs U_{0} }{1},$$
	and by Theorem \ref{2601192} and $2k-t-1<(t+1)(k-t)$, 
	$$\mv_1'=\mv_2'=\cdots=\mv'_{t+1}=\left\{V\in\binom{F_{0}\bs U_{0}}{2}: a \in V\right\}$$
	for some $a\in F_{0}\bs U_{0}$.
	These together with  \eqref{2601194} yield
	$$\mt_{t}(\mf)=\left\{T\in \binom{ U_{0}\cup F_{0}}{t+2}: \left| T\cap \left( U_{0}\cup \left\{a\right\}\right)\right|\ge t+1\right\},$$
	as desired.
\end{proof}

\begin{proof}[\bf Proof of Proposition \ref{pr1-2}] 
	Pick $U_0\in\mmu_t(\mf)$. 
Since $\tf=t+2$ and $\tttf=t+1$, there exists $F_0\in\mf$ such that $|U_0\cap F_0|<t$.  
If $|U_0\cap F_0|\le t-2$, then $t\ge2$, and (\ro1) holds by Lemma \ref{2601161}. 
Suppose $|U_0\cap F_0|=t-1$. 
If $\mathcal{T}_t(\mathcal{F}) \nsubseteq
\binom{U_0 \cup F_0}{t+2}$, then (\ro1) follows from Lemma  \ref{2601222}; in the opposite case, Lemma \ref{2601221} implies that at least one of (\ro1) and (\ro2) holds.
\end{proof}

\begin{lem}\label{cor-2}
	Suppose $n>2k$, $k\ge t+3$  and $\mf$ is a maximal $t$-intersecting subfamily of ${[n]\c k}$ with $\tf=t+2$ and $\tttf=t+1$. If $\ttf=\{T\in{M\c t+2}: |T\cap W|\ge t+1\}$ for some $M\in{[n]\c k+2}$ and $W\in{M\c t+2}$, then $\mf$ is a family described in Construction \ref{con-2}.
\end{lem}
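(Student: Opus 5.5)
Since $\mf$ is maximal, the plan is to show that every $F\in\mf$ belongs to the family $\mf^*$ of Construction \ref{con-2} built from $M$ and $W$. Lemma \ref{con2-maximal} (\ro1) says $\mf^*$ is $t$-intersecting, so $\mf\subseteq\mf^*$ together with maximality gives $\mf=\mf^*$. Everything therefore rests on the given description of $\ttf$: each $F\in\mf$ meets every $T\in\ttf$ in at least $t$ elements, and we turn this into conditions on $|F\cap W|$ and $|F\cap M|$.

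\textbf{Step 1: $|F\cap W|\ge t$.} The set $W$ itself lies in $\ttf$, so this is immediate.

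\textbf{Step 2: $|F\cap W|=t+1$ forces $F\cap(M\bs W)\neq\emptyset$.} Write $W\bs F=\{w\}$. Suppose $F\cap (M\bs W)=\emptyset$. Choose $x\in M\bs W$; this is possible because $|M\bs W|=k-t\ge3$. Pick $w'\in W\cap F$ and set $T=(W\bs\{w'\})\cup\{x\}$. Then $|T\cap W|=t+1$, so $T\in\ttf$. On the other hand, $T\cap F=(W\cap F)\bs\{w'\}$, which has size $t$. That is still allowed, so this choice does not give a contradiction. I would instead take $T=(W\bs\{w',w''\})\cup\{w\}\cup\ldots$, but that set has $|T\cap W|\le t+1$ only if it removes exactly one element of $W$. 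So every $T\in\ttf$ has the form $W$ or $(W\bs\{y\})\cup\{x\}$ with $y\in W$ and $x\in M\bs W$, and for $F$ this gives $|T\cap F|\ge t$ automatically.

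So the cover condition alone does not exclude this $F$. The argument has to use the fact that $\ttf$ is \emph{exactly} the family of minimum $t$-covers. If such an $F$ were in $\mf$, I would exhibit a $(t+2)$-set outside the described family that is still a $t$-cover, contradicting the equality. The candidate is $T'=(W\bs\{y\})\cup\{z\}$ with $z\notin M$, using $y\ne w$ (or perhaps $y=w$). To check that $T'$ is a $t$-cover, we need every $G\in\mf$ to satisfy $|G\cap T'|\ge t$. This requires first understanding all of $\mf$, so the order of the steps must be arranged carefully.

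\textbf{Step 3 (the main obstacle): sets with $|F\cap W|=t$.} Here we must prove $F\subseteq M$. For $y\in W\bs F$ and $x\in M\bs W$, the cover $(W\bs\{y\})\cup\{x\}$ meets $F$ in $t$ elements only if $x\in F$. Both elements of $W\bs F$ are available as $y$, so every $x\in M\bs W$ lies in $F$. Hence $F\supseteq (W\cap F)\cup(M\bs W)$, a set of size $t+(k-t)=k$, and so $F\subseteq M$. That settles this case.

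Returning to Step 2, the cleanest route is to argue by exactness. Let $\mf'$ be the set of members of $\mf$ that are not already known to lie in $\mf^*$. Then $\mf\cap\mf^*$ together with $\mf'$ constrains which sets are minimum covers, and $\mf^*$ is maximal, so $\mf\subseteq\mf^*$ follows once no $F$ of type ``$|F\cap W|=t+1$, $F\cap M=F\cap W$'' survives. For such an $F$ (with $W\bs F=\{w\}$), consider the $(t+2)$-set $T'=(W\bs\{w\})\cup\{z\}$ for some $z\in F\bs M$. Every member $G$ of $\mf$ meets $W$ in at least $t$ elements. If $G$ contains $W\bs\{w\}$, then $|G\cap T'|\ge t+1$. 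Otherwise $G$ contains $w$ and misses some element of $W\bs\{w\}$, and then $|G\cap T'|\ge t$ needs $|G\cap W|\ge t+1$ or $z\in G$. Members with $|G\cap W|=t$ lie in $M$ by Step 3, yet they must also $t$-intersect $F$ while $|F\cap M|=t+1$. I expect a counting choice of $z$ (using $|F\bs M|=k-t-1\ge2$ and the $t$-intersection of these $G$ with $F$) to make $T'$ a $t$-cover not in $\ttf$, giving a contradiction. This is the step I expect to require the most care.
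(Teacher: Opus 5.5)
Your overall frame (show $\mf\subseteq\mf^*$, then use Lemma \ref{con2-maximal}\,(i) and maximality) matches the paper. However, Step 2 is never actually closed, and you look for the wrong mechanism there. You defer to a hoped-for ``counting choice of $z$'' making $T'$ a cover, and you say exactness of $\ttf$ is what must be used. The input actually needed is $\tau_t(\mf)=t+2$, applied to the $(t+1)$-set $F\cap W=W\bs\{w\}$.

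That set is not a $t$-cover, so some $G\in\mf$ has $|G\cap(W\bs\{w\})|\le t-1$. Since $|G\cap W|\ge t$, this forces $|G\cap W|=t$, so by Step 3 we get $G=(G\cap W)\cup(M\bs W)\subseteq M$. Then $t\le|F\cap G|\le |G\cap(W\bs\{w\})|+|F\cap(M\bs W)|\le t-1+|F\cap(M\bs W)|$, which gives $F\cap(M\bs W)\neq\emptyset$. This is the paper's argument.

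Your last paragraph already isolates exactly these $G$ (those with $|G\cap W|=t$, $w\in G$, $G\subseteq M$) as the only possible obstructions to $T'$ being a cover. What you missed is that, under the assumption $F\cap(M\bs W)=\emptyset$, each such $G$ has $F\cap G\subseteq F\cap M=W\bs\{w\}$, hence $|F\cap G|=t-1$, so none can exist. But then $W\bs\{w\}$ is itself a $(t+1)$-element $t$-cover, which is an immediate contradiction with no need for $T'$ or any choice of $z$.

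Step 3 also contains a wrong choice, and it matters because Step 2 relies on Step 3. Removing $y\in W\bs F$ gives no information. Since $y\notin F$, the set $(W\bs\{y\})\cap F=W\cap F$ already has $t$ elements, so $(W\bs\{y\})\cup\{x\}$ meets $F$ in at least $t$ elements whether or not $x\in F$. You must instead remove some $y\in W\cap F$; the paper does this by taking $U\in\binom{W}{t+1}$ with $|U\cap F|=t-1$. Then $(W\bs\{y\})\cup\{x\}\in\ttf$ meets $F$ in $t-1+|F\cap\{x\}|$ elements. This forces $x\in F$ for every $x\in M\bs W$, and hence $F\subseteq M$.

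With these two repairs, the case $W\subseteq F$ being trivial, your framework finishes the proof exactly as in the paper.
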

\begin{proof}
	For $F\in\mf$, we have $\left|F\cap W\right|\ge t$ by $W\in \mt_{t}(\mf)$. Set 
	$$\ms_i=\{F\in\mf:|F\cap W|=t+i\},\quad i\in\{0,1,2\}.$$
	
	Suppose $F\in\ms_0$.
	Then there exists $U\in\binom{W}{t+1}$ such that $\left|F\cap U\right|=t-1$. By $U\cup \left\{m\right\}\in \mt_{t}(\mf)$  for each $m\in M\bs W$, we know $M\bs W\subseteq F$, which implies 
		\begin{equation}\label{F1}
			\ms_0\subseteq\left\{F\in{[n]\c k}: |F\cap W|=t,\ M\bs W\subseteq F\right\}=\left\{F\in{M\c k}: |F\cap W|=t\right\}.
			\end{equation} 
			Suppose $F\in\ms_1$. 
			By $\tau_{t}(\mf)=t+2$, there exists $G\in\mf$ such that $\left|G\cap F\cap W\right|<t$. This together with $\left|G\cap W\right|\ge t$ yields $\left|G\cap W\right|=t$. Notice that $G=\left(G\cap W\right)\cup \left(M\bs W\right)$. Since $\mf$ is $t$-intersecting, we obtain $F\cap  \left(M\bs W\right)\neq \emptyset$ and
			\begin{equation}\label{F2}
				\ms_1\subseteq\left\{F\in{[n]\c k}: |F\cap W|=t+1,\ F\cap(M\bs W)\neq\emptyset\right\}.
			\end{equation} 
	Note that
	\begin{equation}\label{F3}
		\ms_2\subseteq\left\{F\in{[n]\choose k}: W\subseteq F\right\}.	
	\end{equation} 
	By \eqref{F1}--\eqref{F3}, $\mf$ is contained in a family described in Construction \ref{con-2}. 
	Then the desired result follows from the maximality of $\mf$ and Lemma \ref{con2-maximal}.
\end{proof}

\subsection{The case $\tau_t(\ttf)=t+2$}

We begin with some properties of the family described in Construction \ref{con-3}, which are verified in Section \ref{verify-3}.

\begin{lem}\label{con3-maximal}
	Let $n$, $k$, $t$, $Z$ and $\mf$ be as in Construction \ref{con-3}. The following hold.
	\begin{itemize}
		\item[\rm(\ro1)] $\mf$ is a maximal $t$-intersecting subfamily of ${[n]\c k}$.
		\item[\rm(\ro2)] $\tf=t+2$, $\ttf={Z\c t+2}$ and $\tttf=t+2$.
		\item[\rm(\ro3)] $|\mf|=f_3(n,k,t)>{t+4\c2}({n-t-2\c k-t-2}-2{n-t-3\c k-t-3})$.
	\end{itemize}
\end{lem}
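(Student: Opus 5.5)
I will verify the three items directly from the definition $\mf=\{F\in{[n]\c k}:|F\cap Z|\ge t+2\}$ with $|Z|=t+4$, in the order (\ro3), (\ro1), (\ro2).

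\emph{Size and the inequality (\ro3).} Split $\mf$ by $|F\cap Z|\in\{t+2,t+3,t+4\}$. Counting each part gives
$$|\mf|={t+4\c2}{n-t-4\c k-t-2}+(t+4){n-t-4\c k-t-3}+{n-t-4\c k-t-4}=f_3(n,k,t).$$
For the lower bound, I will write ${n-t-2\c k-t-2}={n-t-4\c k-t-2}+2{n-t-4\c k-t-3}+{n-t-4\c k-t-4}$ using Pascal's rule twice. I will also write ${n-t-3\c k-t-3}={n-t-4\c k-t-3}+{n-t-4\c k-t-4}$. Then
$${t+4\c2}\Big({n-t-2\c k-t-2}-2{n-t-3\c k-t-3}\Big)={t+4\c2}\Big({n-t-4\c k-t-2}-{n-t-4\c k-t-4}\Big).$$
This is strictly less than $f_3$, because the extra terms $(t+4){n-t-4\c k-t-3}+{n-t-4\c k-t-4}$ are positive for $n>2k$ and $k\ge t+3$.

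\emph{$t$-intersecting and maximal (\ro1).} Take $F,F'\in\mf$. Then $|F\cap F'|\ge|F\cap F'\cap Z|\ge(t+2)+(t+2)-(t+4)=t$, so $\mf$ is $t$-intersecting.

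For maximality, take $G\in{[n]\c k}\bs\mf$, so $|G\cap Z|\le t+1$. I will build $F\in\mf$ with $|F\cap G|<t$. Choose $F\cap Z$ to be a $(t+2)$-subset of $Z$ containing $Z\bs G$ and as few points of $G\cap Z$ as possible. Since $|Z\bs G|\ge 3$, we get $|F\cap G\cap Z|=\max\{0,\,t+2-|Z\bs G|\}\le t-1$. Fill the remaining $k-t-2$ elements of $F$ from $[n]\bs(Z\cup G)$. This set has size at least $n-t-4-k>k$, so there is room. Then $|F\cap G|\le t-1$, so $G$ cannot be added.

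\emph{Covering data (\ro2).} Every $(t+2)$-subset $S\subseteq Z$ satisfies $|S\cap F|\ge (t+2)+(t+2)-(t+4)=t$, so it is a $t$-cover and $\tf\le t+2$.

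Conversely, let $S$ be any $t$-cover with $|S|\le t+2$. I claim $S\subseteq Z$ and $|S|=t+2$. This gives both $\tf=t+2$ and $\ttf={Z\c t+2}$. Suppose $|S\cap Z|=s$. Choose $F\in\mf$ with $F\cap Z$ a $(t+2)$-set avoiding as much of $S\cap Z$ as possible, so that $|F\cap S\cap Z|=\max\{0,s-2\}$. Choose $F\bs Z$ to avoid $S$, which is possible since $n$ is large. Then $|F\cap S|\ge t$ forces $s\ge t+2$. Since $|S|\le t+2$, this gives $S\subseteq Z$ and $|S|=t+2$.

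Finally, $\tttf=\tau_t\big({Z\c t+2}\big)$. Any $(t+2)$-subset of $Z$ is a $t$-cover of this family by the same counting, so the value is at most $t+2$. For the lower bound, take $R$ with $|R|\le t+1$. If $|R\cap Z|\le t+1$, choose a $(t+2)$-subset of $Z$ avoiding two elements of $R\cap Z$ when possible. It meets $R$ in at most $\max\{0,|R\cap Z|-2\}\le t-1$ elements, so $R$ is not a $t$-cover. Hence $\tttf=t+2$.

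\emph{Expected obstacle.} Nothing here is genuinely hard. The only points needing care are the maximality construction, which requires enough room outside $Z\cup G$, and the binomial identity in (\ro3). Both are routine, given $n>2k$ and $k\ge t+3$.
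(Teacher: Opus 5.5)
The $t$-intersecting property, item (ii) and item (iii) are correct and follow the paper's route. Your Pascal-rule identity $\binom{n-t-2}{k-t-2}-2\binom{n-t-3}{k-t-3}=\binom{n-t-4}{k-t-2}-\binom{n-t-4}{k-t-4}$ is just an exact form of the paper's estimate through the members with $|F\cap Z|=t+2$.

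\textbf{The gap is in the maximality step.} You claim $|[n]\setminus(Z\cup G)|\ge n-t-4-k>k$. The lemma only assumes $n>2k$, and $n-t-4-k>k$ requires $n>2k+t+4$.

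The correct count is $|[n]\setminus(Z\cup G)|=n-t-4-k+|G\cap Z|\ge k-t-3+|G\cap Z|$. Your $F$, however, needs $k-t-2$ points outside $Z$. So there is not enough room when $G\cap Z=\emptyset$ and $n=2k+1$.

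For $t=1$ the construction then genuinely fails, not just its justification. Take $t=1$, $k=4$, $n=9$ and $G=[9]\setminus Z$. Every $F\in\mathcal{F}$ with $|F\cap Z|=3$ has its single point outside $Z$ inside $G$, so $|F\cap G|=1=t$. Hence no witness with $|F\cap Z|=t+2$ exists. When $|G\cap Z|\ge 1$, or when $t\ge 2$, your construction can be rescued by recounting, but the proposal does not do this.

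\textbf{How to repair it.} The missing idea is to use members with $|F\cap Z|=t+3$ when $G$ meets $Z$ in at most $t$ points; this is exactly the paper's case split.
\begin{itemize}
\item If $|G\cap Z|\le t$: take $A\in\binom{Z}{t+3}$ omitting a point of $G\cap Z$ (if $G\cap Z\neq\emptyset$), so that $|A\cap G|\le t-1$. Add $k-t-3$ points of $[n]\setminus(Z\cup G)$, a set of size at least $k-t-3$.
\item If $|G\cap Z|=t+1$: your $(t+2)$-subset construction works, since then $|[n]\setminus(Z\cup G)|=n-k-3\ge k-2>k-t-2$.
\end{itemize}

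A smaller point of the same kind arises in (ii). Replace ``since $n$ is large'' by the count $|[n]\setminus(Z\cup S)|\ge n-2t-6\ge 2k-2t-5\ge k-t-2$, which holds under $n>2k$ because $k\ge t+3$.
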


\begin{pr1}\label{pr1-3}
	Suppose $n>2k$, $k\ge t+3$  and $\mf$ is a maximal $t$-intersecting subfamily of ${[n]\c k}$ with $\tf=t+2$ and $\tttf=t+2$. 
	\begin{itemize}
		\item[{\rm(\ro1)}] If $t=1$, then $|\ttf|<(k-t)(k-t+1)+1$.
			\item[{\rm(\ro2)}] If $t\ge2$, then $|\ttf|\le{t+4\c2}$, and if equality holds, then $\ttf={Z\c t+2}$ for some $Z\in{[n]\c t+4}$.
	\end{itemize}
\end{pr1}
\begin{proof}
	By Lemma \ref{CST}, we know $\ttf\subseteq{[n]\c t+2}$ is a $t$-intersecting family with $\tttf=t+2$. Then the desired result follows from \cite[Theorem 1.8]{Frankl2026}, which states that  if $\mh\subseteq{[n]\c t+2}$ is a $t$-intersecting family with $\tau_t(\mh)=t+2$, then $|\mh|\le{t+4\c2}$ and when $t\ge2$, equality holds if and only if $\mh={Y\c t+2}$ for some $Y\in{[n]\c t+4}$.
\end{proof}

\begin{lem}\label{cor-3}
	Suppose $n>2k$, $k\ge t+3$  and $\mf$ is a maximal $t$-intersecting subfamily of ${[n]\c k}$ with $\tf=t+2$ and $\tttf=t+2$. If  $\ttf={Z\c t+2}$ for some $Z\in{[n]\c t+4}$, then  $\mf$ is a family described in Construction \ref{con-3}.
\end{lem}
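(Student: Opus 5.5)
We show that every $F\in\mf$ satisfies $|F\cap Z|\ge t+2$. Then $\mf$ is contained in the family $\mathcal{Z}=\{F\in{[n]\c k}: |F\cap Z|\ge t+2\}$ of Construction \ref{con-3}. The conclusion follows from the maximality of $\mf$ together with Lemma \ref{con3-maximal}(\ro1), which says that $\mathcal{Z}$ is $t$-intersecting: maximality forces $\mf=\mathcal{Z}$. This mirrors the proofs of Lemmas \ref{cor-1} and \ref{cor-2}.

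For the key step, fix $F\in\mf$ and set $s=|F\cap Z|$. Since every $(t+2)$-subset $T$ of $Z$ lies in $\ttf$, each such $T$ is a $t$-cover of $\mf$, so $|F\cap T|\ge t$ for every $T\in{Z\c t+2}$. Suppose, for contradiction, that $s\le t+1$. Then $|Z\bs F|=t+4-s\ge 3$. Choose $T$ to consist of $\min\{3,t+2\}=3$ elements of $Z\bs F$ together with $t-1$ elements of $F\cap Z$. This is possible as long as $s\ge t-1$. If instead $s\le t-2$, any choice of $T$ meets $F$ in at most $s<t$ elements, so this case is already a contradiction. In the remaining case the chosen $T$ has size $3+(t-1)=t+2$ and satisfies $|T\cap F|=t-1<t$, contradicting that $T$ is a $t$-cover. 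Concretely, we take $T$ to contain as many points of $Z\bs F$ as possible, which gives
\[
|T\cap F|=\max\{0,\,(t+2)-(t+4-s)\}=\max\{0,s-2\}\le t-1 .
\]
Hence $s\ge t+2$.

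There is no real obstacle here; the only point needing care is the counting check that a $(t+2)$-subset of $Z$ can avoid at least three points of $F\cap Z$'s complement in $Z$, which holds since $|Z\bs F|\ge 3$ exactly when $s\le t+1$. The hypotheses $n>2k$ and $k\ge t+3$ are needed only to invoke Lemma \ref{con3-maximal}.
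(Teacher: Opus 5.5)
Your proposal is correct and follows the paper's proof: you show $|F\cap Z|\ge t+2$ for every $F\in\mf$ by exhibiting a $(t+2)$-subset of $Z$ (which lies in $\ttf$) meeting $F$ in at most $t-1$ elements, then conclude with maximality of $\mf$ and Lemma \ref{con3-maximal}. Your explicit count $|T\cap F|=\max\{0,s-2\}$ simply spells out the step the paper leaves implicit.
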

\begin{proof}
	If $|F\cap Z|\le t+1$ for some $F\in\mf$, then there exists $T\in\ttf$ such that
	$$|T\cap F|=|T\cap F\cap Z|\le t-1.$$
	This contradicts the fact that $T$ is a $t$-cover of $\mf$. Hence $|F\cap Z|\ge t+2$ for each $F\in\mf$. 
	Then the desired result follows from the maximality of $\mf$ and Lemma \ref{con3-maximal}. 
\end{proof}

\section{Proof of Theorem \ref{main}}\label{S4}

Before proving Theorem \ref{main}, we first show some results related to upper bounds on sizes of $t$-intersecting families.

\begin{lem}{\rm(\cite[Lemma 2.7]{SHMC})}\label{FS}
	Suppose $n\ge2k$. Let $\mf\subseteq{[n]\c k}$, $S\in{[n]\choose s}$ and $G\in{[n]\c k}$ with $|G\cap S|=\ell<t$ and  $|G\cap F|\ge t$ for each $F\in\mf$. There exists $R\in{[n]\choose s+t-\ell}$ with $S\subseteq R$ such that
	$$|\mf_S|\le{k-\ell\choose t-\ell}|\mf_R|.$$	
\end{lem}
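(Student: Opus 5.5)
The plan is a direct pigeonhole argument: cover $\mf_S$ by a bounded number of subfamilies $\mf_R$ with $R\supseteq S$ of the required size, and take the largest. The counting is the whole proof, so I do not expect any step to be a real obstacle.

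First, I will show that every $F\in\mf_S$ contains a large piece of $G\bs S$. Let $F\in\mf_S$. By hypothesis $|F\cap G|\ge t$. We can write $F\cap G=(F\cap G\cap S)\cup(F\cap(G\bs S))$. Since $S\subseteq F$, the first part is exactly $G\cap S$, which has size $\ell$. Hence
$$|F\cap(G\bs S)|\ge t-\ell\ge 1.$$
Because $|G\bs S|=k-\ell$, there is some $H\in{G\bs S\c t-\ell}$ with $H\subseteq F$. Then $S\cup H\subseteq F$, and $|S\cup H|=s+t-\ell$ since $H\cap S=\emptyset$.

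Next, this gives the covering
$$\mf_S=\bigcup_{H\in{G\bs S\c t-\ell}}\mf_{S\cup H},$$
where the inclusion $\supseteq$ is trivial. Taking sizes,
$$|\mf_S|\le\sum_{H}|\mf_{S\cup H}|\le{k-\ell\c t-\ell}\max_H|\mf_{S\cup H}|.$$

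Finally, I choose $R=S\cup H_0$, where $H_0$ attains this maximum. Then $R\in{[n]\c s+t-\ell}$, $S\subseteq R$, and $|\mf_S|\le{k-\ell\c t-\ell}|\mf_R|$, which is the claim.

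The only points needing care are bookkeeping ones: the count $|G\bs S|=k-\ell$, the disjointness of $H$ from $S$ (so that $|R|=s+t-\ell$ exactly), and the fact that $\ell<t$ makes $t-\ell\ge1$, so $R$ strictly enlarges $S$. The hypothesis $n\ge 2k$ is not needed for this argument.
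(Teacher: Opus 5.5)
Your proof is correct: the covering $\mathcal{F}_S=\bigcup_{H\in\binom{G\setminus S}{t-\ell}}\mathcal{F}_{S\cup H}$ plus pigeonhole is the standard argument for this lemma, which the paper does not reprove but quotes from \cite{SHMC}. The one detail you leave implicit is that $\binom{G\setminus S}{t-\ell}\neq\emptyset$, so that $R$ exists even when $\mathcal{F}_S=\emptyset$; this follows from the paper's standing assumption $k\ge t$, and you are right that $n\ge 2k$ is never used.
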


\begin{lem}\label{bigf}
	Suppose $n\ge\max\{2k,(k-t)(k-t+1)+t\}$, $k\ge t+3$ and $\mf\subseteq{[n]\c k}$ is a $t$-intersecting family. Then 
	\begin{equation}\label{bigF}
		|\mf|\le (k-t+1)^{\tau_t(\mf)-t}{\tau_t(\mf)\choose t}{n-\tau_t(\mf)\choose k-\tf}.
	\end{equation}
	If each member of $\mb\subseteq\mf$ does not contain any member of $\ttf$, then 
	\begin{equation}\label{bigB}
		|\mb|\le(k-t+1)^{\tf-t+1}{\tf\c t}{n-\tf-1\c k-\tf-1}.
			\end{equation}
\end{lem}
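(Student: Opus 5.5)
The plan is the standard branching argument based on Lemma \ref{FS}. Write $\tau=\tf$. Start from $S_0=\emptyset$ with $|\mf_{S_0}|=|\mf|$ and build a chain of sets $S_0\subseteq S_1\subseteq\cdots$, bounding the number of branches at each step. Since $\mf$ is $t$-intersecting, every $F\in\mf$ satisfies $|G\cap F|\ge t$ for each $G\in\mf$.

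\textbf{Main inequality \eqref{bigF}.} First pick any $F_0\in\mf$. Every member of $\mf$ meets $F_0$ in at least $t$ elements, so
$$|\mf|\le\sum_{S\in{F_0\c t}}|\mf_S|\le{k\c t}\max_S|\mf_S|.$$
This first step produces a factor ${k\c t}$ rather than ${\tau\c t}$, so it has to be handled differently. I would instead start from a minimum $t$-cover $W\in\ttf$, $|W|=\tau$. Every $F\in\mf$ contains some $t$-subset of $W$, so
$$|\mf|\le\sum_{S\in{W\c t}}|\mf_S|.$$
Now fix $S$ with $|S|=s<\tau$. Then $S$ is not a $t$-cover, so some $G\in\mf$ has $\ell=|G\cap S|<t$. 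Lemma \ref{FS} gives $R\supseteq S$ with $|R|=s+t-\ell$ and
$$|\mf_S|\le{k-\ell\c t-\ell}|\mf_R|.$$
To keep each step's factor at most $k-t+1$, I would use a refined version of this step: add one element at a time, choosing it from $G\setminus S$. Each member of $\mf_S$ meets $G$ in at least $t>\ell$ elements, so it contains some element of $G\setminus S$, and
$$|\mf_S|\le\sum_{x\in G\setminus S}|\mf_{S\cup\{x\}}|\le(k-\ell)\max_x|\mf_{S\cup\{x\}}|.$$
Choose $G$ with $\ell=t-1$ when possible, which gives the factor $k-t+1$. If $\ell\le t-2$ for every witnessing $G$, apply Lemma \ref{FS} instead. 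This costs ${k-\ell\c t-\ell}$ but increases $|S|$ by $t-\ell\ge2$, and I expect that for $k\ge t+3$ this is at most $(k-t+1)^{t-\ell}$ times a loss that the hypothesis on $n$ absorbs. Iterate until $|S|\ge\tau$. Sets of size $\tau$ then give $|\mf_S|\le{n-\tau\c k-\tau}$. Sets of larger size $\tau+j$ give ${n-\tau-j\c k-\tau-j}$, which is smaller by a factor of about $(k/n)^j$, and $n\ge(k-t)(k-t+1)+t$ is exactly what makes ${k-\ell\c t-\ell}{n-\tau-j\c k-\tau-j}\le(k-t+1)^{t-\ell}{n-\tau\c k-\tau}$ in the overshoot cases. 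Collecting the steps gives
$$|\mf|\le{\tau\c t}(k-t+1)^{\tau-t}{n-\tau\c k-\tau}.$$

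\textbf{Inequality \eqref{bigB}.} Run the same branching on $\mb$. When the process reaches $|S|=\tau$, the set $S$ is either a $t$-cover of $\mf$ of size $\tau$, that is, a member of $\ttf$, or it is not. If $S\in\ttf$, then $\mb_S=\emptyset$, because no member of $\mb$ contains a member of $\ttf$. If $S\notin\ttf$, then some $G\in\mf$ meets $S$ in fewer than $t$ elements, and one more branching step gives a factor $k-t+1$ and sets of size $\tau+1$. This yields
$$|\mb|\le(k-t+1)^{\tau-t+1}{\tau\c t}{n-\tau-1\c k-\tau-1}.$$

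\textbf{Main obstacle.} The hard part is the bookkeeping for steps with $\ell\le t-2$, where Lemma \ref{FS} jumps by more than one element. I must check that ${k-\ell\c t-\ell}$ combined with the binomial overshoot stays within $(k-t+1)^{t-\ell}$ under the stated bound on $n$. I would first try proving the weaker statement that each jump of size $d$ costs at most $(k-t+1)^d$, using ${k-\ell\c d}\le(k-\ell)^d/d!$, and resolve the remaining cases by comparing with the factor $n/k$ gained from the binomial ratio.
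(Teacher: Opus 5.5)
Your proposal is correct and follows the paper's proof essentially step for step. You branch over $\binom{W}{t}$ for some $W\in\ttf$, iterate Lemma~\ref{FS} until the set has size at least $\tf$, and absorb any overshoot using $\binom{n-x}{k-x}\ge(k-t+1)\binom{n-x-1}{k-x-1}$ for $t\le x\le k-1$; this is exactly where $n\ge(k-t)(k-t+1)+t$ is used. For $\mb$ you take one more step at size $\tf$, since a set $S$ of that size with $\mb_S\neq\emptyset$ is not in $\ttf$.

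The step you flag as the main obstacle is immediate and costs nothing extra. Indeed $\binom{k-\ell}{t-\ell}=\prod_{i=1}^{t-\ell}\frac{k-t+i}{i}\le(k-t+1)^{t-\ell}$, which is precisely the paper's bound $|\mg_{A_i}|\le(k-t+1)^{|A_{i+1}|-|A_i|}|\mg_{A_{i+1}}|$. One small correction: in your overshoot display the right-hand exponent should be $t-\ell-j$ rather than $t-\ell$, because part of the budget has already been spent. This sharper form still follows from the same two facts.
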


\begin{proof}
	If $\tau_t(\mf)=t$, then there is nothing to prove. Hence we may assume that $\tau_t(\mf)\ge t+1$. 
	Let $\mg\subseteq\mf$ with $\mg\neq\emptyset$, and $S\in\ttf$. We have 
	\begin{equation}\label{ga01}
		\mg=\bigcup_{A\in{S\c t}}\mg_A.
	\end{equation}

	Pick $A_0\in{S\c t}$ with $\mg_{A_0}\neq\emptyset$. Since $|A_0|<\tau_t(\mf)$, we have $|A_0\cap F_0|<t$ for some $F_0\in\mf$. The set $F_0$ is a $t$-cover of $\mg$. 
	By Lemma \ref{FS}, there exists a $(|A_0|+t-|A_0\cap F_0|)$-subset $A_1$ of $[n]$ with $A_0\subsetneq A_1$ and
	$$|\mg_{A_0}|\le{k-|A_0\cap F_0|\c  t-|A_0\cap F_0|}|\mg_{A_1}|\le(k-t+1)^{|A_1|-|A_0|}|\mg_{A_1}|.$$
	If $|A_1|<\tau_t(\mf)$, then apply Lemma \ref{FS} on $A_1$. 
	Using Lemma \ref{FS} repeatedly, 
	we finally get a series of subsets $A_0,A_1,\dots,A_\ell$ of $[n]$ with $|A_0|<|A_1|<\dots<|A_{\ell-1}|<\tau_t(\mf)\le|A_\ell|$ and  $|\mg_{A_{i}}|\le(k-t+1)^{|A_{i+1}|-|A_i|}|\mg_{A_{i+1}}|$
	for each $i\in\{0,1,\dots,\ell-1\}$. 
	It follows that
		\begin{equation}\label{g0l}
			|\mg_{A_0}|\le(k-t+1)^{|A_\ell|-t}|\mg_{A_\ell}|\le(k-t+1)^{|A_\ell|-t}{n-|A_\ell|\c k-|A_\ell|}.	
			\end{equation}

		Set $\mg=\mf$. 
	Since $n\ge(k-t)(k-t+1)+t$, for $x\in\{t,t+1,\dots,k-1\}$, we have
	\begin{equation}\label{ga03}
		{n-x\c k-x}=\dfrac{n-x}{k-x}{n-x-1\c k-x-1}\ge\dfrac{n-t}{k-t}{n-x-1\c k-x-1}\ge(k-t+1){n-x-1\c k-x-1}.
	\end{equation}
	Notice that $|A_\ell|\le k$ from \eqref{g0l} and $\mf_{A_0}\neq\emptyset$. By \eqref{ga01}--\eqref{ga03}, we get \eqref{bigF}. 
	
	Suppose $\tf=k$. Then $\mf\subseteq\ttf$ and $\mb=\emptyset$, yielding the desired result. In the following, assume $\tf<k$. If $\mb=\emptyset$, then \eqref{bigB} is immediate. Next we consider the case  $\mb\neq\emptyset$. 
	Set $\mg=\mb$. 
	We claim that
	\begin{equation}\label{addclaim}
		|\mb_{A_0}|\le(k-t+1)^{\tf-t+1}{n-\tf-1\c k-\tf-1}.
	\end{equation}
	Suppose $|A_\ell|\ge\tf+1$. We obtain $|A_\ell|\le k$ from \eqref {g0l} and $\mb_{A_0}\neq\emptyset$. Then \eqref{addclaim} follows from \eqref{g0l} and \eqref{ga03}. Now suppose $|A_\ell|=\tf$. 
	Since $\mb_{A_\ell}\neq\emptyset$,  by the definition of $\mb$, we have $A_\ell\not\in\ttf$. 
	Thus $|A_\ell\cap F_\ell|<t$ for some $F_\ell\in\mf$. 
	By Lemma \ref{FS}, there exists a subset $A_{\ell+1}$ of $[n]$ with $|A_{\ell+1}|=\tf+t-|A_\ell\cap F_\ell|$ and
	\begin{equation}\label{add}
		|\mb_{A_\ell}|\le{k-|A_\ell\cap F_\ell|\c t-|A_\ell\cap F_\ell|}|\mb_{A_{\ell+1}}|\le(k-t+1)^{|A_{\ell+1}|-\tf}|\mb_{A_{\ell+1}}|.
	\end{equation}
	Since $\mb_{A_\ell}\neq\emptyset$, we have $\mb_{A_{\ell+1}}\neq\emptyset$ and $|A_{\ell+1}|\le k$. 
	It follows from \eqref{ga03}, \eqref{add}, $\tf<|A_{\ell+1}|$  and
	$|\mb_{A_{\ell+1}}|\le{n-|A_{\ell+1}|\c k-|A_{\ell+1}|}$ that
	\begin{equation*}
		|\mb_{A_\ell}|\le(k-t+1){n-\tf-1\c k-\tf-1}.
	\end{equation*}
	This together with \eqref{g0l} completes the proof of \eqref{addclaim}. Finally, \eqref{bigB} follows from \eqref{ga01} and \eqref{addclaim}.
\end{proof}

\begin{proof}[\bf Proof of Theorem \ref{main}] 
	Suppose that $\mf$ is a $t$-intersecting subfamily  of ${[n]\c k}$ with maximum size under the condition that $\tf\ge t+2$. 
	Since $\tau_t(\mf)\le\tau_t(\mf\cup\{H\})$ for each $H\in{[n]\c k}\bs\mf$, $\mf$ is a maximal $t$-intersecting family.
	Recall that $f_1(n,k,t)$, $f_2(n,k,t)$ and $f_3(n,k,t)$ are defined in Constructions \ref{con-1}, \ref{con-2} and \ref{con-3}, respectively. 
	From Lemmas \ref{con1-maximal}, \ref{con2-maximal} and \ref{con3-maximal}, we further obtain
	\begin{equation}\label{contra}
		|\mf|\ge\max\{f_1(n,k,t),f_2(n,k,t),f_3(n,k,t)\}.
	\end{equation}
	 It is sufficient to show that $\mf$ is a family described in one of Constructions \ref{con-1}, \ref{con-2} and \ref{con-3}.
	
	By Lemma \ref{bigf}, we have $|\mf|\le g(n,k,t,\tf)$ where
	$$g(n,k,t,x):=(k-t+1)^{x-t}{x\c t}{n-x\c k-x}.$$
	For $x\in\{t+3,\cdots,k-1\}$, by $n\ge{t+3\c2}(k-t+1)^4$, we have
	$$\dfrac{g(n,k,t,x+1)}{g(n,k,t,x)}=(k-t+1)\cdot\dfrac{x+1}{x-t+1}\cdot\dfrac{k-x}{n-x}\le\dfrac{(t+4)(k-t+1)(k-t-3)}{4(n-t-3)}<1.$$
	Suppose $\tf\ge t+3$. Then $|\mf|\le(k-t+1)^3{t+3\c3}{n-t-3\c k-t-3}$. By $n\ge{t+3\c2}(k-t+1)^4$ and Lemma \ref{con3-maximal} (\ro3), we have
	\begin{equation*}
		\begin{aligned}
			\dfrac{f_3(n,k,t)-|\mf|}{{t+4\c2}{n-t-3\c k-t-3}}>&\dfrac{n-t-2}{k-t-2}-2-\dfrac{(t+1)(t+2)(k-t+1)^3}{3(t+4)}\\
			\ge&\ \dfrac{1}{k-t-2}\left(n-t-2-(2+(t+1)(k-t+1)^3)(k-t-2)\right)\\
			\ge&\ \dfrac{1}{k-t-2}(n-(t+2)(k-t+1)^4)>0,
		\end{aligned}
	\end{equation*}
	a contradiction to \eqref{contra}. Therefore $\tf=t+2$. 
	
	Note that $\tttf\in\{t,t+1,t+2\}$  from  Lemma \ref{CST} and the maximality of $\mf$, and
	\begin{equation}\label{plus}
		\mf=\left\{F\in\mf: T\subseteq F\ \mbox{for some}\ T\in\ttf\right\}\cup\left\{F\in\mf: T\not\subseteq F\ \mbox{for each}\ T\in\ttf\right\}.
	\end{equation}
	We claim that 
	\begin{equation}\label{ttf}
		|\ttf|\ge\max\left\{(k-t)(k-t+1)+1,(t+2)(k-t)+1,{t+4\c2}\right\}.
	\end{equation}
	If $|\ttf|<(k-t)(k-t+1)+1$, then by \eqref{plus}, $n\ge{t+3\c2}(k-t+1)^4$, Lemmas \ref{con1-maximal} (\ro3) and \ref{bigf}, we obtain
	\begin{equation*}
		\begin{aligned}
			\dfrac{f_1(n,k,t)-|\mf|}{{n-t-3\c k-t-3}}&>\dfrac{n-t-2}{k-t-2}-(k-t)(2(k-t)^2+1)-(k-t+1)^3{t+2\c2}\\
			&>\dfrac{1}{k-t-2}\left(n-t-2-(k-t-2)(k-t+1)^3{t+3\c2}\right)>0,
		\end{aligned}
	\end{equation*}
	a contradiction to \eqref{contra}. If $|\ttf|<(t+2)(k-t)+1$, then by \eqref{plus}, $n\ge{t+3\c2}(k-t+1)^4$, Lemmas \ref{con2-maximal} (\ro3) and \ref{bigf}, we obtain
	\begin{equation*}
		\begin{aligned}
			\dfrac{f_2(n,k,t)-|\mf|}{{n-t-3\c k-t-3}}&>\dfrac{n-t-2}{k-t-2}-(t+2)(k-t)^2-(k-t+1)^3{t+2\c2}\\
			&>\dfrac{1}{k-t-2}\left(n-t-2-(k-t-2)(k-t+1)^3{t+3\c2}\right)>0.
		\end{aligned}
	\end{equation*}
	This contradicts \eqref{contra}.
	If $|\ttf|<{t+4\c2}$, then by \eqref{plus},  $n\ge{t+3\c2}(k-t+1)^4$, Lemmas \ref{con3-maximal} (\ro3) and \ref{bigf}, we obtain
	\begin{equation*}
		\begin{aligned}
			\dfrac{f_3(n,k,t)-|\mf|}{{n-t-3\c k-t-3}}&>\dfrac{n-t-2}{k-t-2}-2{t+4\c2}-(k-t+1)^3{t+2\c2}\\
			&>\dfrac{n-t-2}{k-t-2}-\dfrac{10}{3}{t+3\c2}-(k-t+1)^3{t+3\c2}\\
			&>\dfrac{1}{k-t-2}\left(n-(k-t-2)((k-t+1)^3+4){t+3\c2}\right)>0.
		\end{aligned}
	\end{equation*}
	This also  contradicts \eqref{contra}. We further conclude that \eqref{ttf} holds.
	
	If $\tttf=t$, then by \eqref{ttf}, Proposition \ref{pr1-1} and Lemma \ref{cor-1}, $\mf$ is a family described in Construction \ref{con-1}.  
	If $\tttf=t+1$, then by \eqref{ttf}, 	Proposition \ref{pr1-2} and Lemma \ref{cor-2}, $\mf$ is a family described in Construction \ref{con-2}. 
	If $\tttf=t+2$, then by \eqref{ttf}, Proposition \ref{pr1-3} and Lemma \ref{cor-3}, $\mf$ is a family described in Construction \ref{con-3}. This finishes our proof.	
\end{proof}

\section{Proofs of Lemmas \ref{con1-maximal}, \ref{con2-maximal} and \ref{con3-maximal}}\label{verify}
\subsection{Proof of Lemma \ref{con1-maximal}}\label{verify-1}
	Write
	\begin{align*}
		\ma&=\left\{F\in{[n]\c k}: A\cup T\subseteq F,\ F\cap(B\cup\{u\})\neq\emptyset\right\},\\
		\mb&=\left\{F\in{[n]\c k}: F\cap(A\cup T)=T,\ F\cap B\neq\emptyset,\ F\cap C\neq\emptyset\right\}.	
	\end{align*}

	(\ro1) It is routine to check that both $\ma\cup\mb$ and $\{G_1,G_2,G_3\}$ are $t$-intersecting. Pick $F_0\in\ma\cup\mb$. If $F_0\in\ma$, then by $A\subseteq F_0$, we have $|F_0\cap G_i|\ge|A|=t$ for $i\in\{1,2\}$, and $|F_0\cap G_3|\ge|T\cap A|+|F_0\cap(B\cup\{u\})|\ge t$. On the other hand, if $F_0\in\mb$, then $|F_0\cap G_i|\ge|T\cap A|+1=t$ for $i\in\{1,2\}$, and $|F_0\cap G_3|\ge|T\cap A|+|F_0\cap B|\ge t$. We further conclude that $\mf$ is $t$-intersecting.

	Pick $H\in{[n]\c k}\bs\mf$. To show $\mf$ is maximal, it is sufficient to prove that $\mf\cup\{H\}$ is not $t$-intersecting.

	Suppose $T\subseteq H$. If $A\cup T\subseteq H$, then by $H\not\in\ma$, we have $H\cap(B\cup\{u\})=\emptyset$ and $|H\cap G_3|<t$. If $(A\cup T)\cap H=T$, then by $H\not\in\mb$, either $H\cap B$ or $H\cap C$ is empty, and $|H\cap G_i|<t$ for some $i\in\{1,2\}$. 
	
	Suppose $T\not\subseteq H$. 
	Assume $H\cap B=\emptyset$. By $H\neq G_2$, we know $A\not\subseteq H$ or $C\not\subseteq H$. If $A\not\subseteq H$, then $|G_1\cap H|<t$. If $C\not\subseteq H$, then $F_1=T\cup(B\bs\{p\})\cup\{q\}\in\mb$, where $p\in B$ and $q\in C\bs H$, satisfies $|H\cap F_1|=|H\cap T|<t$. Similarly, if $H\cap C=\emptyset$, then $\mf\cup\{H\}$ is not $t$-intersecting.

	Now assume that neither $H\cap B$ nor $H\cap C$ is empty. If $B\not\subseteq H$ and $C\not\subseteq H$, then pick $b\in B\bs H$ and $c\in C\bs H$. Since $n>2k$, we have $|[n]\bs(H\cup A\cup T)|\ge (2k+1)-(k+t+1)=k-t$. Let $D\in{[n]\bs(H\cup A\cup T)\c k-t}$ with $b,c\in D$. We obtain $F_2=T\cup D\in\mb$ and $|H\cap F_2|=|H\cap T|<t$.

	Suppose $C\subseteq H$.  By $H\cap B\neq\emptyset$, we know that $F_3=(A\cup T)\cup(B\bs\{y\})\in\ma$, where $y\in H\cap B$, satisfies $|H\cap F_3|\le|H\bs(C\cup\{y\})|=t-1$. 
	
	Suppose $B\subseteq H$. When there exists $z\in(H\bs(B\cup\{u\}))\cap C$, we have $F_4=(T\cup A)\cup(C\bs\{z\})\in\ma$ and $|H\cap F_4|\le|H\bs(B\cup\{z\})|=t-1$. 
	Next assume $(H\bs(B\cup\{u\}))\cap C=\emptyset$. 
	This together with $H\cap C\neq\emptyset$ yields $u\in H$.  We have $A\cap T\not\subseteq H$ by $B\cup\{u\}\subseteq H$ and $H\neq G_3$. Therefore, $A\cap T\neq\emptyset$ and $t\ge2$.  
	If $|H\cap T|\le t-2$, then $F_5=T\cup(C\bs\{u\})\cup\{u'\}\in\mb$, where $u'\in B$, satisfies $|H\cap F_5|\le|H\cap T|+1<t$. 
	Since $|H\cap(A\cup T)|\le t-1$ and $A\cap T\not\subseteq H$,  
	if $|H\cap T|=t-1$, then  $|H\cap A|\le t-2$ and $|H\cap G_2|\le|H\cap A|+1<t$.
	
	In summary, $\mf\cup\{H\}$ is not $t$-intersecting, as desired.

	(\ro2) The set $T\cup\{u,v\}$, where $v\in B$, is a $t$-cover of $\mf$, and $\bigcap_{F\in\mb}F=T\not\subseteq G_1$. Then $t+1\le\tf\le t+2$. Suppose $S\in{[n]\c t+1}\cup{[n]\c t+2}$ is a $t$-cover of $\mf$. 
	
	Assume  $T\not\subseteq S$. 
	There exist $F_6,F_7\in\mb$ such that $F_6\cap F_7=T$ and $F_6,F_7\subseteq T\cup B\cup C$. 
	By $|S\cap F_6|\ge t$ and $|S\cap F_7|\ge t$, we have $|S|-|S\cap T|\ge2(t-|S\cap T|)$, implying that $|S\cap T|\ge2t-|S|\ge t-2$. 
	
	If $|S\cap T|=t-2$, then by $|S|-(t-2)\ge4$ and $|S|\le t+2$, we get $|S|=t+2$ and $|S\cap(B\cup C)|=|S\cap((F_6\cup F_7)\bs T)|=4$. 
	We may assume $|S\cap B|\le|S\cap C|$, which implies $B\bs S\neq\emptyset$. Pick $w\in S\cap C$. We have $|((B\cup C)\bs S)\cup\{w\}|=2(k-t)-3\ge k-t$. There exists $E\in{B\cup C\c k-t}$ such that $E\cap S=\{w\}$ and $E\cap B\neq\emptyset$. 
	Then $F_8=T\cup E\in\mb$ and $|S\cap F_8|=|S\cap T|+1<t$, a contradiction. We further conclude $|S\cap T|=t-1$ and $|S\cap(B\cup C)|\le3$. 
	
	If $S\cap B=\emptyset$, then $A\subseteq S$ follows from $|S\cap G_1|\ge t$. We also know  $|S\cap C|\le|S\bs A|\le2$, and $C\bs S\neq\emptyset$ from $|C|\ge3$. Let $I\in{B\cup(C\bs S)\c k-t}$ with $|I\cap C|=1$, and $F_9=T\cup I$. 
	Then $F_9\in\mb$ and $|F_9\cap S|=|T\cap S|<t$, a contradiction. Hence $S\cap B\neq\emptyset$. Similarly, we obtain $S\cap C\neq\emptyset$.
	
	Recall that $|S\cap(B\cup C)|\le3$. This together with $|B|=|C|\ge3$, $S\cap B\neq\emptyset$ and $S\cap C\neq\emptyset$ yields $B\bs S\neq\emptyset$ and $C\bs S\neq\emptyset$.  Then there exists $F_{10}\in\mb$ with $F_{10}\cap(S\cap(B\cup C))=\emptyset$ and $F_{10}\subseteq T\cup B\cup C$, implying that $|S\cap F_{10}|=|S\cap T|<t$, a contradiction. Hence $T\subseteq S$ and $\tttf=t$.
	
	Suppose $|S|=t+1$. Then $S\neq A\cup T$ from $|S\cap G_3|\ge t$ and $|A\cap T|=t-1$. By $T\subseteq S$, we further get $|S\cap G_i|=t-1$ for some $i\in\{1,2\}$, a contradiction.  Thus $|S|=t+2$ and $\tf=t+2$. 
	
	For $R\in{[n]\c t+2}$ with $T\subseteq R$, it is routine to check that, if $A\not\subseteq R$, then $R\in\ttf$ if and only if neither $R\cap B$ nor $R\cap C$ is empty; if $A\subseteq R$, then $R\in\ttf$ if and only if $R\cap(B\cup\{u\})\neq\emptyset$. We further conclude that $|\ttf|=(k-t)(k-t+1)+1$, as desired.

	(\ro3)	Since $|A\cup T|=t+1$, we have
	$$|\ma|={n-t-1\choose k-t-1}-{n-k-2\choose k-t-1}={n-t-1\c k-t-1}-{n-k-1\c k-t}+{n-k-2\c k-t}.$$
	By using inclusion-exclusion, we get
	$$|\mb|={n-t-1\c k-t}-2{n-k-1\c k-t}+{n-2k+t-1\c k-t}.$$
	The families $\ma$, $\mb$ and $\{G_1,G_2,G_3\}$ are pairwise disjoint. We further obtain $|\mf|=f_1(n,k,t)$.
	
	Observe that 
	\begin{equation*}
		\begin{aligned}
			&\left\{F\in{[n]\c k}: A\cup T\subseteq F,\ |F\cap(B\cup\{u\})|=1\right\}\subseteq\ma,\\
			&\left\{F\in{[n]\c k}: F\cap(A\cup T)=T,\ |F\cap B|=|F\cap C|=1\right\}\subseteq\mb.
		\end{aligned}
	\end{equation*}
	We further obtain
	\begin{align*}
		|\ma|&\ge(k-t+1)\left({n-t-2\c k-t-2}-(k-t){n-t-3\c k-t-3}\right),\\
		|\mb|&\ge(k-t)^2\left({n-t-3\c k-t-2}-2(k-t-1){n-t-4\c k-t-3}\right)\\
		&=(k-t)^2\left({n-t-2\c k-t-2}-{n-t-3\c k-t-3}-2(k-t-1){n-t-4\c k-t-3}\right)\\
		&\ge(k-t)^2\left({n-t-2\c k-t-2}-(2k-2t-1){n-t-3\c k-t-3}\right).
	\end{align*}
	These together with $|\mf|>|\mf_T|=|\ma|+|\mb|$ finish the proof of (\ro3).
\qed
\subsection{Proof of Lemma \ref{con2-maximal}}\label{verify-2}

	For convenience, set $\ms_i=\{F\in\mf:|F\cap W|=t+i\}$, where $i\in\{0,1,2\}$.

	(\ro1) By definition, each member of $\ms_2$ contains $W$, implying that $\ms_2$ is $t$-intersecting. We also know $\ms_2$ and $\ms_0\cup\ms_1$ are cross $t$-intersecting. 
	Pick $F_0,F_1\in\ms_0\cup\ms_1$. If $F_0,F_1\in\ms_0$, then $|F_0\cap F_1|=|F_0\cap F_1\cap W|+|M\bs W|\ge (t-2)+(k-t)>t$; 
	if $F_0,F_1\in\ms_1$, then $|F_0\cap F_1|\ge|F_0\cap F_1\cap W|\ge t$; if $F_0\in\ms_0$ and $F_1\in\ms_1$, then $|F_0\cap F_1|\ge|F_0\cap F_1\cap W|+1\ge t$. Now we conclude that $\mf$ is $t$-intersecting.
	
	Pick $H\in{[n]\c k}\bs\mf$. To show  $\mf$ is maximal, it is sufficient to prove that $\mf\cup\{H\}$ is not $t$-intersecting.
	
	It follows from  $H\not\in\ms_2$ that $|H\cap W|\le t+1$. 
	If $|H\cap W|<t$, then by $n>2k$ there exists $F_2\in\ms_2$ such that $|H\cap F_2|=|H\cap W|<t$. 
	If $|H\cap W|=t$, then by $H\not\in\ms_0$ and $|[n]\bs W|\ge2k-t-1>2k-2t-1$, we know $M\bs W\not\subseteq H$ and there exists $F_3\in\ms_1$ such that $|H\cap F_3|=|H\cap F_3\cap W|<t$. 
	If $|H\cap W|=t+1$, then by $H\not\in\ms_1$, we have $H\cap(M\bs W)=\emptyset$ and there exists $F_4\in\ms_0$ such that $|H\cap F_4|=|H\cap F_4\cap W|<t$. 
	We further conclude that $\mf\cup\{H\}$ is not $t$-intersecting, as desired.

	(\ro2) Since $|[n]\bs W|>2(k-t-1)$ and $|M\bs W|\ge3$,  there exist $F_5,F_6\in\ms_1$ such that $|F_5\cap F_6|=|F_5\cap F_6\cap W|=t$. Then $F_5\cap F_6\not\subseteq F_7$ for some $F_7\in\ms_0$. We have $\tf\ge t+1$. Note that $\tf\le t+2$ since $W$ is a $t$-cover of $\mf$.

	Let $T\in\ttf$. Since $n>2k$ and $|M\bs W|\ge3$, 
	if $|T\cap W|<t$, then $|T\cap F_8|=|T\cap W|<t$ for some $F_8\in\ms_2$; if $|T\cap W|=t$, then there exists $F_9\in\ms_1$ such that $|T\cap F_9|=|T\cap F_9\cap W|<t$. These contradictions yield $|T\cap W|\ge t+1$.
	
	Now $|T\cap M|\ge |T\cap W|\ge t+1$. If $|T\cap M|=t+1$, then $T\cap M=T\cap W$, and   $|T\cap F_{10}|=|T\cap F_{10}\cap W|<t$ for some $F_{10}\in\ms_0$, a contradiction. Hence $|T\cap M|\ge t+2$, which implies $|T|\ge t+2$. This together with $T\in\ttf$ and $\tf\le t+2$ yields $\tf=t+2$.
	
	By the argument above, we know $T\in{M\choose t+2}$ and $|T\cap W|\ge t+1$. It is routine to check that $\{T\in{M\c t+2}: |T\cap W|\ge t+1\}\subseteq\ttf$. Then $\ttf=\{T\in{M\c t+2}: |T\cap W|\ge t+1\}$. We further obtain $|\bigcap_{S\in\ttf}S|=|\bigcap_{S\in{W\c t+1}}S|<t$. This together with the fact that each $(t+1)$-subset of $W$ is a $t$-cover of $\ttf$ yields $\tttf=t+1$, as desired.
	
	(\ro3) A direct calculation shows that $|\ms_0|={t+2\c2}$ and $|\ms_2|={n-t-2\c k-t-2}$. We also have
	$$\ms_1=\left\{F\in{[n]\c k}: |F\cap W|=t+1\right\}\bs\left\{F\in{[n]\c k}: |F\cap W|=t+1, F\cap(M\bs W)=\emptyset\right\}$$
	and $|\ms_1|={t+2\c1}({n-t-2\c k-t-1}-{n-k-2\c k-t-1})$. Then $|\mf|=f_2(n,k,t)$ by $|\mf|=|\ms_0|+|\ms_1|+|\ms_2|$. From 
	\begin{align*}
		|\ms_1|&\ge\left|\left\{F\in{[n]\c k}: |F\cap W|=t+1,\ |F\cap(M\bs W)|=1\right\}\right|\\
		&\ge(t+2)(k-t)\left({n-t-3\c k-t-2}-(k-t-1){n-t-4\c k-t-3}\right)\\
		&\ge(t+2)(k-t)\left({n-t-2\c k-t-2}-(k-t){n-t-3\c k-t-3}\right),
	\end{align*}
	we  obtain
	$$|\mf|>|\ms_1|+|\ms_2|\ge((t+2)(k-t)+1){n-t-2\c k-t-2}-(t+2)(k-t)^2{n-t-3\c k-t-3},$$
	as desired.
\qed
\subsection{Proof of Lemma \ref{con3-maximal}}\label{verify-3}

Write $\ma_i=\{F\in{[n]\c k}:|F\cap Z|=t+i\}$ for $i=2,3,4$.

	(\ro1) Two subsets of $Z$ with sizes at least $t+2$ have intersection of size at least $t$. Then for any $F_1,F_2\in\mf$, we have $|F_1\cap F_2|\ge|(F_1\cap Z)\cap(F_2\cap Z)|\ge t$. Thus $\mf$ is $t$-intersecting.
	
	Pick $H\in{[n]\c k}\bs\mf$. We have $|H\cap Z|\le t+1$. 
	If $|H\cap Z|\le t$, then $|H\cap A_1|<t$ for some $A_1\in{Z\c t+3}$. 
	Since $|[n]\bs(Z\cup H)|\ge(2k+1)-(k+t+4)=k-t-3$, we have $F_3=A_1\cup B_1\in\ma_3$  and $|H\cap F_3|=|H\cap A_1|<t$ for some $B_1\in{[n]\bs(Z\cup H)\c k-t-3}$. 
	Suppose $|H\cap Z|=t+1$. Then $|H\cap A_2|<t$ for some $A_2\in{Z\c t+2}$. Since $|[n]\bs(Z\cup H)|\ge(2k+1)-(k+3)>k-t-2$, we have $F_4=A_2\cup B_2\in\ma_2$ and $|H\cap F_4|=|H\cap A_2|<t$ for some $B_2\in{[n]\bs(Z\cup H)\c k-t-2}$. 
	Therefore $\mf\cup\{H\}$ is not $t$-intersecting, and $\mf$ is maximal.

	(\ro2) We have $|\bigcap_{F\in\mf}F|<t$, and each $(t+2)$-subset of $Z$ is a $t$-cover of $\mf$. 
	Then $t+1\le\tf\le t+2$. 
	Suppose $S\in{[n]\c t+1}\cup{[n]\c t+2}$ with $|S\cap Z|\le t+1$. There exists $A_3\in{Z\c t+2}$ with $|S\cap A_3|<t$. 
	Since $|[n]\bs(S\cup Z)|\ge(2k+1)-(k-1)-(t+4)=k-t-2$. 
	We have $F_5=A_3\cup B_3\in\ma_2$  and $|S\cap F_5|=|S\cap A_3|<t$ for some $B_3\in{[n]\bs(S\cup Z)\c k-t-2}$.
	Therefore $\tf=t+2$ and $\ttf\subseteq{Z\c t+2}$. 
	We also have ${Z\c t+2}\subseteq\ttf$. Then $\ttf={Z\c t+2}$, and $\tttf=t+2$ follows.

	(\ro3) We obtain
	$$|\mf|=\sum_{i=2}^4|\ma_i|={t+4\c2}{n-t-4\c k-t-2}+(t+4){n-t-4\c k-t-3}+{n-t-4\c k-t-4}=f_3(n,k,t).$$
	We also have
	$$|\mf|>|\ma_2|\ge{t+4\c2}\left({n-t-2\c k-t-2}-2{n-t-3\c k-t-3}\right),$$
	as desired.
\qed

\medskip
\noindent{\bf Conflict of interest.}	
We have no known financial and personal relationships with other people or organizations that could potentially 
influence the work in this paper.

\medskip
\noindent{\bf Acknowledgment.}	
The authors would like to thank the reviewers for their careful reading of the paper and their helpful comments which led to a great improvement of the presentation of the paper. T. Yao is supported by Natural Science Foundation of Henan (262300422621). K. Wang is supported by the National Natural Science Foundation of China (12131011, 12571347) and Beijing Natural Science Foundation (1252010, 1262010).

 \medskip
 \noindent{\bf Data availability.}	
 No data was used for the research described in this paper.


\begin{thebibliography}{99}
	\bibitem{AK} R. Ahlswede and L.H. Khachatrian, The complete nontrivial-intersection theorem for systems of finite sets, J. Combin. Theory Ser. A 76 (1996) 121--138.
	\bibitem{AK2} R. Ahlswede and L.H. Khachatrian, The complete intersection theorem for systems of finite sets, European J. Combin. 18 (1997) 125--136.
	\bibitem{T1} A. Bickle, Intersecting families of $3$-sets, Australas. J. Combin. 93 (2025) 216--223.
	\bibitem{SHMC} M. Cao, M. Lu, B. Lv and K. Wang, Nearly extremal non-trivial cross $t$-intersecting
	families and $r$-wise $t$-intersecting families, European J. Combin. 120 (2024) 103958.
	\bibitem{EKR} P. Erd\H{o}s, C. Ko and R. Rado, Intersection theorems for systems of finite sets, Quart. J. Math. Oxford Ser. (2) 12 (1961) 313--320.
	\bibitem{EL1975} P. Erd\H{o}s and L. Lov\'{a}sz, Problems and results on $3$-chromatic hypergraphs and some related questions, in:  Infinite and Finite Sets (Keszthely, 1973), Vol. \uppercase\expandafter{\romannumeral2}, Colloq. Math. Soc. J\'{a}nos Bolyai, Vol. 10, North-Holland, Amsterdam, 1975, pp. 609--627.
	\bibitem{Fn} P. Frankl, The Erd\H{o}s-Ko-Rado theorem is true for $n=ckt$, in: Combinatorics (Keszthely, 1976), Vol. \uppercase\expandafter{\romannumeral1},  Colloq. Math. Soc. J\'{a}nos Bolyai, Vol. 18, North-Holland, Amsterdam, 1978, pp. 365--375.
	

	
	\bibitem{SHMT} P. Frankl, On intersecting families of finite sets, J. Combin. Theory Ser. A 24 (1978) 146--161.
	\bibitem{Frankl1980} P. Frankl, On intersecting families of finite sets, Bull. Aust. Math. Soc. 21 (1980) 363--372.
	\bibitem{Frankl2026} P. Frankl, Critically intersecting hypergraphs, European J. Combin. 132 (2026) 104286.
	\bibitem{Frankl2023} P. Frankl and A. Kupavskii, Uniform intersecting families with large covering number, European J. Combin. 113 (2023) 103747.
	\bibitem{Frankl1995} P. Frankl, K. Ota and N. Tokushige, Uniform intersecting families with covering number four, J. Combin. Theory Ser. A 71 (1995) 127--145.
	\bibitem{Frankl1996} P. Frankl, K. Ota and N. Tokushige, Covers in uniform intersecting families and a counterexample to a conjecture of Lov\'{a}sz, J. Combin. Theory Ser. A 74 (1996) 33--42.
	
	
	\bibitem{Frankl2025}  P. Frankl and J. Wang, Intersecting families with covering number three, J. Combin. Theory Ser. B 171 (2025) 96--139.
	
	\bibitem{Frudi1988} Z. F\"{u}redi, Matchings and covers in hypergraphs, Graphs Combin. 4 (1988) 115--206.
	
	
	\bibitem{SHM} A.J.W. Hilton and E.C. Milner, Some intersection theorems for systems of finite sets, Quart. J. Math. Oxford Ser. (2)  18 (1967) 369--384.
\bibitem{Kupaviskii2026} 	A. Kupavskii, Intersecting families with covering number $3$, J. Combin. Theory Ser. B 177 (2026) 216--233.
	\bibitem{L1975} L. Lov\'{a}sz, On minimax theorems of combinatorics, Mat. Lapok 26 (1975) 209--264.
	\bibitem{Moura1999} L. Moura, Maximal $s$-wise $t$-intersecting families of sets: kernels, generating sets, and enumeration, J. Combin. Theory Ser. A 87 (1999) 52--73.
		\bibitem{Polcyn2017} J. Polcyn and A. Ruci\'{n}ski, A hierarchy of maximal intersecting triple systems, Opuscula Math. 37 (2017) 597--608.
	
	
	\bibitem{2601191} C. Shi, P. Frankl and J. Qian, On non-empty cross-intersecting families, Combinatorica 42 (2022) 1513--1525.
	

	\bibitem{Wn}R.M. Wilson, The exact bound in the Erd\H{o}s-Ko-Rado theorem, Combinatorica 4 (1984) 247--257.
\end{thebibliography}
\end{document}